\newcommand{\sD}{\mathcal{D}}
\newcommand{\sS}{\mathcal{S}}
\newcommand{\sL}{\mathcal{L}}
\newcommand{\sT}{\mathcal{T}}
\newcommand{\R}{\mathbb{R}}
\newcommand{\E}{\mathbb{E}}
\newcommand{\Z}{\mathbb{Z}}
\newcommand{\bv}[1]{\mathbf{#1}}
\newcommand{\policy}[2]{\pi_{#1}^{#2}}
\newcommand{\abs}[1]{\left\vert #1 \right\vert}
\newcommand{\KL}{\text{KL}}
\newcommand{\DKL}{\mathcal{D}_{\KL}}
\newtheorem{Remark}{Remark}
\newtheorem{Theorem}{Theorem}
\newtheorem{Definition}{Definition}
\newtheorem{Lemma}{Lemma}
\newtheorem{Assumption}{Assumption}
\newtheorem{Proposition}{Proposition}
\newtheorem{Problem}{Problem}
\title{\LARGE \bf
On the design of autonomous agents from multiple data sources}
\author{\'Emiland Garrab\'e and Giovanni Russo$^\ast$
\thanks{$^\ast$ E. Garrab\'e (email: {\tt\small egarrabe@unisa.it}) and G. Russo (email: {\tt\small giovarusso@unisa.it}) are with the Department of Information and Electrical Engineering and Applied Mathematics (DIEM) at the University of Salerno, 84084, Salerno, Italy.}%
}
\begin{document}

\maketitle
\thispagestyle{empty}
\pagestyle{empty}

\begin{abstract}
This paper is concerned with the problem of designing agents able to dynamically select information from multiple data sources in order to tackle  tasks that involve tracking a target behavior while optimizing a reward. We formulate this problem as a data-driven optimal control problem with integer decision variables and give an explicit expression for its  solution. The solution determines how (and when) the data from the sources should be used by the agent. We also formalize a notion of agent's regret and, by relaxing the problem, give a regret upper bound. Simulations complement the results.
\end{abstract}

\section{INTRODUCTION}
The problem of designing  agents able to tackle tasks by using data coming from multiple sources is attracting  much research attention. Learning algorithms with multiple simulators in the  loop \cite{7106543}, robots navigating via maps from other robots \cite{5876227} and the sharing economy  \cite{Sharing_book} are a few examples that involve  making decisions based on the crowdsourcing of information \cite{9244209}. For these {crowdsourcing} agents, which  craft  their  actions  from mutually excluding options, a number of key questions arise: {\em is it possible to optimally orchestrate the use of the sources? And can performance be rigorously quantified?} Positively answering these questions would have intriguing implications, highlighting the possibility of designing agents that can optimally perform tasks by re-using {\em different} datasets. In this context, this paper presents an optimal decision making strategy that allows an agent to switch between the sources in order to tackle certain tasks. Moreover, we define and quantify the regret of the agent that uses our strategy.

\subsubsection*{Related Work} in \cite{9244209}, we  formalized the so-called {crowdsourcing control problem} (CCP) as a finite-horizon data-driven optimal control problem. Results on data-driven control include \cite{8795639,8933093,8960476}, which take a behavioral systems perspective, \cite{8703172}, that presents an approach to compute minimum-energy controls for linear systems, \cite{8039204,DBLP:conf/l4dc/WabersichZ20,9109670} which introduce algorithms inspired from MPC, \cite{TANASKOVIC20171}, where a data-driven design approach is devised for stabilizable Lipschitz systems. We also recall here \cite{Gagliardi_D_et_Russo_G_IFAC2020_extended_Arxiv} which, by leveraging an approach that can be traced back to \cite{Karny_M_Automatica_1996_Towards_Fully_Prob}, seeks to synthesize control policies from demonstration datasets for systems with actuation constraints and the recent \cite{9296954}, which discusses some of the key challenges of extracting control relevant information from large amounts of data. Recently, it has been shown that data-driven control can benefit from the use of properly {\em patched} data from multiple sources \cite{9062331}. In the context of learning systems, a Multi-Fidelity Reinforcement Learning (RL) algorithm, enabling an agent to build a policy by sampling from different datasets, has been introduced/analyzed in \cite{7106543} and, in  e.g.  \cite{pmlr-v119-agarwal20c,OFFLINE_RL}, it has been also shown that state-of-the-art RL algorithms can improve their performance if these are trained on multiple, heterogeneous, offline datasets.

\subsubsection*{Contributions}  we consider the problem of designing agents able to dynamically choose between multiple sources in order to fulfill tasks that involve tracking a target behavior while optimizing a reward function. We formulate this problem as a data-driven, integer, optimal control problem. For this problem: (i) we formalize the problem as a data-driven  control problem. This leads to an optimal control formulation with integer decision variables; (ii) by  relaxing the problem, we find an explicit expression for its optimal solution. The solution determines, at each time step, which source should be picked by the agent; (iii) finally, we formalize a notion of regret w.r.t. an oracle and give a regret upper bound. The oracle is obtained by  relaxing the original control problem. This  leads to an infinite-dimensional convex problem that we also solve explicitly. Our results are  complemented via simulations.

While the results of this paper are inspired by the ones presented in \cite{9244209}, our paper extends this in several ways. First, differently from \cite{9244209}, here the formalization of the control problem leads to study an optimization problem with integer decision variables. Second, we find an optimal solution for the problem considered here and actually prove that this solution coincides with a non-optimal solution from \cite{9244209}. Moreover,  we introduce a notion of regret and give a regret upper bound. This leads to an infinite-dimensional convex problem that was not considered in \cite{9244209} and, as a result, our regret analysis cannot obtained with the results from \cite{9244209}.

\subsubsection*{Mathematical Preliminaries}

sets are in {\em calligraphic} and vectors  in {\bf bold}. Consider the measurable space $(\mathcal{X},\mathcal{F}_x)$, where $\mathcal{X}\subseteq\R^{d}$ ($\mathcal{X}\subseteq\Z^{d}$) and $\mathcal{F}_x$ is a $\sigma$-algebra on $\mathcal{X}$. 
We denote a random variable on $(\mathcal{X},\mathcal{F}_x)$  by $\mathbf{X}$ and its realization  by $\mathbf{x}$. The \textit{probability density ({mass})  function} or \textit{pdf} (\textit{pmf}) of a continuous (discrete) $\mathbf{X}$ is denoted by $p(\mathbf{x})$ and $\sD$ is the convex subset of pdfs/pmfs. When we take the integrals (sums) involving pdfs (pmfs) we always assume that these exist. The  expectation of a function $\mathbf{h}(\cdot)$ of the continuous  variable $\mathbf{X}$ is  $\E_{{p}}[\mathbf{h}(\mathbf{X})]:=\int\mathbf{h}(\mathbf{x})p(\mathbf{x})d\mathbf{x}$, where the integral is over the support of $p(\mathbf{x})$ - for discrete variables the integral is replaced with the sum. The \textit{joint} pdf (pmf) of two random variables, $\mathbf{X}_1$ and $\mathbf{X}_2$, is  $p(\mathbf{x}_1,\mathbf{x}_2)$ and the \textit{conditional} pdf (pmf) of $\mathbf{X}_1$ w.r.t. $\mathbf{X}_2$ is $p\left( \mathbf{x}_1| \mathbf{x}_2 \right)$. Countable sets are denoted by $\lbrace w_k \rbrace_{k_1:k_n}$, where $w_k$ is a set element and $k_1:k_n$ is the closed set of consecutive integers between index $k_1$ and $k_n$. We also let $p_{0:N}:= p(\bv{x}_0,\ldots,\bv{x}_N)$, $p_{k:k} := p_k(\bv{x}_k)$ and $p_{k|k-1}:=p_k(\bv{x}_k|\bf{x}_{k-1})$.  \textcolor{black}{Also: (i) functionals are denoted by capital calligraphic characters with arguments in curly brackets; (ii) the internal product between tensors is $\langle \cdot, \cdot \rangle$; (iii) likelihoods have the arguments in square brackets.} The Kullback-Leibler (KL) divergence of $p:=p(\mathbf{x})$ w.r.t. $q:=q(\mathbf{x})$, with $p$ absolutely continuous with respect to $q$, is  $\mathcal{D}_{\KL}\left(p || q \right):= \int p \; \ln\left( {p}/{q}\right)\,d\mathbf{x}$ (for discrete variables the integral is replaced by the sum). For pdfs/pmfs, $\mathcal{D}_{\KL}\left(p^{(1)} || p^{(2)} \right)$ measures the proximity of the pair $p^{(1)}$, $p^{(2)}$. 

\section{The  set-up}\label{sec:problem_formulation}
The agent seeks to craft its behavior by gathering data from a number of sources in order to fulfill a task that involves tracking a target/desired behavior while, at the same time, maximizing a reward function. In what follows, $\bv{x}_k\in\mathcal{X}$ is the state at time step $k$, $\bv{d}_{0:N}:=\{\bv{x}_0,\ldots,\bv{x}_N\}$ is the dataset of the target state trajectory for the agent over $\sT:={0:N}$ and $p(\bv{d}_{0:N}) = p(\bv{x}_0,\ldots,\bv{x}_N)$. Now, by making the standard assumption that the Markov property holds, we have:
\begin{equation}\label{eqn:target}
\begin{split}
p_{0:N} &  = p_{0:0}\prod_{k=1}^N p_{k|k-1}=: p_{0:0}p_{1:N|0},
\end{split}
\end{equation}
which is termed as {\em target behavior}.  As noted in \cite{9244209}, the agent behavior can be designed by designing the joint pdf/pmf $\pi(\bv{x}_0,\ldots,\bv{x}_N)$. By letting $\pi_{k|k-1}:= \pi_k(\bv{x}_k|\bv{x}_{k-1})$ we have:
\begin{equation}\label{eqn:agent}
\begin{split}
\pi_{0:N}  
 &  = 
 \pi_{0:0}\prod_{k=1}^N \pi_{k|k-1} =: \pi_{0:0}\pi_{1:N|0}.
\end{split}
\end{equation}
That is, the {\em agent behavior} $\pi_{0:N} $ can be designed by shaping the $\pi_{k|k-1}$'s. We let $r_k:\mathcal{X}\rightarrow\R$ be the reward obtained by the agent for being in state $\bv{x}_k$ at time $k$. Hence, the expected reward for the agent that follows the behavior in (\ref{eqn:agent}) is $\E_{\pi_{k-1:k-1}}\left[\tilde{r}_k(\bv{X}_{k-1})\right] := \E_{\pi_{k-1:k-1}}\left[\E_{\pi_{k|k-1}}\left[r_k(\bv{X}_k)\right]\right] = \E_{\pi_{k:k}}\left[r_k(\bv{X}_k)\right]$. We let $\sS:={1:S}$ be the set of sources that the agent can use and $\left\{\policy{k|k-1}{(i)}\right\}_{1:N}$ be the sequence of pdfs/pmfs (i.e. the behavior) provided by the $i$-th source. While the sources do not  know the target behavior/reward of the  agent, we make the following assumption, which, as discussed in \cite{9244209}, is not restrictive in practice:
\begin{Assumption}\label{asn:source_policies}
$\DKL(\policy{k|k-1}{(i)}||p_{k|k-1})<+\infty$, $\forall k$, $\forall i\in\sS$.
\end{Assumption}
\begin{Remark} situations where the $\policy{k|k-1}{(i)}$'s are available to the agent naturally arise in a number of applications. For example, within the multi-fidelity RL framework, the sources might be simulators available to the agent and each $\policy{k|k-1}{(i)}$ might be given by the output of the $i$-th simulator.
\end{Remark}
\begin{Remark} in what follows, we use $p_{1:n|0}$ and $\pi_{1:n|0}$ for $p(\bv{x}_1,\ldots,\bv{x}_n|\bv{x}_0)$  and $\pi(\bv{x}_1,\ldots,\bv{x}_n|\bv{x}_0)$ to make equations more compact.
\end{Remark}

\section{The control problem and a link with the CCP}\label{sec:control_problem}
Let $\alpha_k^{(i)}$, $i\in\sS$, be a weight and $\boldsymbol{\alpha}_k$ be the stack of the $\alpha_k^{(i)}$'s. Then, the problem of designing agents able to dynamically choose between different sources in order to track their own target behavior while optimizing the agent-specific reward can be recast via the following
\begin{Problem}\label{prob:problem_3}
find the sequence ${\left\{\boldsymbol{\alpha}_k^\ast\right\}_{1:N}}$ solving
\begin{equation}\label{eqn:problem_3}
    \begin{aligned}
    \underset{\left\{ \boldsymbol{\alpha}_k\right\}_{1:N}}{\text{min}}
    &\DKL\left(\pi_{1:N|0}||p_{1:N|0}\right) - \sum_{k=1}^N\E_{\pi_{k-1:k-1}}\left[\tilde{r}_k(\bv{X}_{k-1})\right]\\
    s.t. & \ \pi_{k|k-1} = \sum_{i\in\sS}\alpha_k^{(i)}\policy{k|k-1}{(i)}, \ \ \ \forall k \\
    & \sum_{i\in\sS}\alpha_k^{(i)} = 1, \ \ \alpha_k^{(i)} \in \{0, 1\},   \ \ \forall k.
    \end{aligned}
\end{equation}
\end{Problem}
In Problem \ref{prob:problem_3}  the constraints formalize the fact that, at each $k$, the agent picks the behavior from one source. The first term in the cost  quantifies the discrepancy between the agent behavior and the target: hence, minimizing this term amounts to tracking the target behavior. Instead, minimizing the second term in the cost implies maximizing the agent's expected reward.
\begin{Remark} Problem \ref{prob:problem_3} captures situations where an agent needs to pick between (rather than combine) multiple sources. These situations arise in a number of applications, such as multi-fidelity RL where the agents needs to make its decision based on the output of different simulators.
\end{Remark}
\begin{Definition} $\left\{{\pi}^\ast_{k|k-1}\right\}_{1:N}$ is an optimal solution of Problem \ref{prob:problem_3} if  ${\pi}^\ast_{k|k-1}:=\sum_{i\in\sS}{\alpha_k^{(i)}}^\ast\pi_{k|k-1}^{(i)}$, with the weights being an optimal solution of (\ref{eqn:problem_3}).
\end{Definition}
We now state the CCP from \cite{9244209} in terms of Problem \ref{prob:problem_3}.
\begin{Problem}\label{prob:CCP}
the CCP is the relaxation of Problem \ref{prob:problem_3} obtained by replacing, $\forall k$, $\alpha_k^{(i)} \in \{0, 1\}$ with $\alpha_k^{(i)} \ge 0$.
\end{Problem}
In what follows, we also make use of the following definition
\begin{Definition}
  $\left\{\tilde{\pi}_{k|k-1}\right\}_{1:N}$ is an approximate solution of the CCP if $\tilde{\pi}_{k|k-1}:=\sum_{i\in\sS}{\tilde\alpha_k^{(i)}}\pi_{k|k-1}^{(i)}$, with the weights being the solution of a problem having the same constraints as the CCP and a  cost upper-bounding the cost of the CCP.
\end{Definition}
Finding the optimal solution of the CCP would involve computing, at each $k$,  $\DKL(\sum_{i\in\sS}\alpha_k^{(i)}\pi^{(i)}_{k|k-1}||p_{k|k-1})$. Even for Gaussians, this computation is analytically intractable and computationally expensive \cite{4218101,Nie_Sun_16}. Hence, in  \cite{9244209} an approximate solution for the CCP is obtained. Namely:
\begin{Lemma}\label{thm:probl_2_sol}
consider the CCP. Then {$\left\{\tilde{\pi}_{k|k-1}\right\}_{1:N}$, with} $\tilde{\pi}_{k|k-1} =\sum_{i\in\sS}{\tilde\alpha_k^{(i)}}\policy{k|k-1}{(i)}$ and 
\begin{equation}\label{eqn:sub_opt_probl_2}
    \begin{aligned}
    \tilde{\boldsymbol{\alpha}}_k \in \underset{\boldsymbol{\alpha}_k}{\text{arg min}}
    &\ \ \bv{a}_k^T(\bv{x}_{k-1})\boldsymbol{\alpha}_k\\
    s.t. & \ \sum_{i\in\sS}\alpha_k^{(i)} = 1, \ \alpha_k^{(i)}\ge 0,  
    \end{aligned}
\end{equation}
is an {approximate} solution of the CCP. In (\ref{eqn:sub_opt_probl_2}), $
 \bv{a}_k(\bv{x}_{k-1}) := [a_k^{(1)}(\bv{x}_{k-1}),\ldots,a_k^{(S)}(\bv{x}_{k-1})]^T$, while \\$ {a}_k^{(i)}(\bv{x}_{k-1})  := \DKL\left(\policy{k|k-1}{(i)}||p_{k|k-1}\right)  - \E_{\pi^{(i)}_{k|k-1}}\left[\bar{r}_k({\bv{X}_{k}})\right]$, and $\bar{r}_k({\bv{X}_{k}})$ is obtained via backward recursion as
\begin{equation}\label{eqn:policy_backward_probl_2}
\begin{split}
& \bar{r}_k(\bv{x}_k) := r_k(\bv{x}_k) + \hat{r}_k(\bv{x}_k), \\ 
& \hat{r}_k(\bv{x}_k) = - \bv{a}^T_{k+1}(\bv{x}_k)\tilde{\boldsymbol{\alpha}}_{k+1}, \ \ \hat{r}_N(\bv{x}_N) = 0.
\end{split}
\end{equation}
\end{Lemma}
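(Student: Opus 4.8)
The plan is to build a tractable upper bound on the CCP cost that is \emph{affine} in each weight vector $\boldsymbol{\alpha}_k$, and then minimise this bound by backward dynamic programming. By the notion of approximate solution introduced above, any minimiser of a problem that has the CCP constraints and a cost upper-bounding the CCP cost qualifies; so it suffices to exhibit such a bounding problem and show that its solution is exactly \eqref{eqn:sub_opt_probl_2}--\eqref{eqn:policy_backward_probl_2}. First I would use the Markov chain rule to split the trajectory divergence into per-step terms,
\[
\DKL\!\left(\pi_{1:N|0}\,\|\,p_{1:N|0}\right)=\sum_{k=1}^N\E_{\pi_{k-1:k-1}}\!\left[\DKL\!\left(\pi_{k|k-1}\,\|\,p_{k|k-1}\right)\right],
\]
and combine this with the identity $\E_{\pi_{k-1:k-1}}[\tilde r_k(\bv{X}_{k-1})]=\E_{\pi_{k:k}}[r_k(\bv{X}_k)]$, so that the cost becomes a sum of contributions each conditioned on $\bv{x}_{k-1}$.

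The crucial step is to replace the intractable divergence of the mixture $\pi_{k|k-1}=\sum_{i\in\sS}\alpha_k^{(i)}\policy{k|k-1}{(i)}$ by a linear surrogate. Since $\sum_{i\in\sS}\alpha_k^{(i)}=1$ we may write $p_{k|k-1}=\sum_{i\in\sS}\alpha_k^{(i)}p_{k|k-1}$, and joint convexity of the KL divergence then yields
\[
\DKL\!\Big(\sum_{i\in\sS}\alpha_k^{(i)}\policy{k|k-1}{(i)}\,\Big\|\,p_{k|k-1}\Big)\le\sum_{i\in\sS}\alpha_k^{(i)}\,\DKL\!\left(\policy{k|k-1}{(i)}\,\|\,p_{k|k-1}\right).
\]
The reward term needs no inequality: by linearity of expectation $\E_{\pi_{k|k-1}}[r_k(\bv{X}_k)]=\sum_{i\in\sS}\alpha_k^{(i)}\E_{\policy{k|k-1}{(i)}}[r_k(\bv{X}_k)]$. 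Substituting both into the decomposed cost produces a bound that is affine in every $\boldsymbol{\alpha}_k$, with each per-step divergence finite by Assumption~\ref{asn:source_policies}; this affine bound is precisely the cost of the bounding problem whose minimiser will be the claimed approximate solution.

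It then remains to minimise this affine bound over the simplex constraints, which I would do by backward induction on $k$, introducing the cost-to-go $V_k(\bv{x}_k)$ with terminal value $V_N(\bv{x}_N)=0$. At stage $k$ the Bellman operator minimises, for each $\bv{x}_{k-1}$,
\[
\sum_{i\in\sS}\alpha_k^{(i)}\Big(\DKL\!\left(\policy{k|k-1}{(i)}\,\|\,p_{k|k-1}\right)-\E_{\policy{k|k-1}{(i)}}[r_k(\bv{X}_k)]+\E_{\policy{k|k-1}{(i)}}[V_k(\bv{X}_k)]\Big),
\]
which is exactly the linear program \eqref{eqn:sub_opt_probl_2} once we set $\hat r_k:=-V_k$ and $\bar r_k:=r_k+\hat r_k$, so that the bracketed coefficient equals $a_k^{(i)}(\bv{x}_{k-1})$. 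The induction gives $V_{k-1}(\bv{x}_{k-1})=\bv{a}_k^T(\bv{x}_{k-1})\tilde{\boldsymbol{\alpha}}_k$, i.e. $\hat r_{k-1}=-\bv{a}_k^T(\bv{x}_{k-1})\tilde{\boldsymbol{\alpha}}_k$, recovering the recursion \eqref{eqn:policy_backward_probl_2}, with the base case $V_N=0$ matching $\hat r_N=0$. Because the only inter-stage coupling is the linear value term carried backward, the minimisation decouples pointwise in $\bv{x}_{k-1}$ and the state-wise minimisers assemble into an admissible solution of the bounding problem.

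The main obstacle is the relaxation step itself: one must verify that the joint-convexity surrogate both preserves the CCP constraints and genuinely upper-bounds the CCP cost \emph{along the entire trajectory}, so that the definition of approximate solution applies to the assembled minimiser rather than only to a single stage. A secondary technical point is the well-posedness of the backward recursion, namely that the terms $\E_{\policy{k|k-1}{(i)}}[V_k(\bv{X}_k)]$ remain finite and that the state-dependent simplex minimisation can be performed independently at each $\bv{x}_{k-1}$ in a measurable way; both follow from Assumption~\ref{asn:source_policies} and the linearity of the surrogate, so I expect the convexity bound to carry the essential content of the proof.
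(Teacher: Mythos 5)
Your argument is correct and is essentially the intended one: the chain-rule decomposition of $\DKL\left(\pi_{1:N|0}||p_{1:N|0}\right)$ into per-step conditional divergences, the convexity bound $\DKL\left(\sum_{i\in\sS}\alpha_k^{(i)}\policy{k|k-1}{(i)}||p_{k|k-1}\right)\le\sum_{i\in\sS}\alpha_k^{(i)}\DKL\left(\policy{k|k-1}{(i)}||p_{k|k-1}\right)$ (valid on the simplex, finite by Assumption~\ref{asn:source_policies}), and the backward minimisation of the resulting affine surrogate reproduce exactly the linear program (\ref{eqn:sub_opt_probl_2}) and the recursion (\ref{eqn:policy_backward_probl_2}), and this is precisely what the paper's definition of an approximate solution requires. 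Note that the paper itself does not reprove this lemma (it imports it from the cited crowdsourcing reference), but your bounding-plus-backward-recursion argument mirrors the splitting used in the paper's own proofs of Theorem~\ref{thm:problem_3} and Lemma~\ref{thm:probl_1_sol}, so there is nothing to add.
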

\begin{Remark} the solution of (\ref{eqn:sub_opt_probl_2})  is determined, $\forall k$, based on $\bv{a}_k(\bv{x}_{k-1})$ and hence $\tilde{\boldsymbol{\alpha}}_k$ depends on $\bv{x}_{k-1}$. This dependency is omitted in what follows for notational convenience.\end{Remark}
\section{Finding a solution to Problem \ref{prob:problem_3}}
We now show that the non-optimal solution to the CCP from Lemma \ref{thm:probl_2_sol} is actually the optimal solution for Prob. \ref{prob:problem_3}.
\begin{Theorem}\label{thm:problem_3}
the approximate solution of the CCP given in Lemma \ref{thm:probl_2_sol} is also the optimal solution of Problem \ref{prob:problem_3}.
\end{Theorem}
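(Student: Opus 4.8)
The plan is to exploit the fact that the surrogate problem solved in Lemma~\ref{thm:probl_2_sol}, namely the per-step program (\ref{eqn:sub_opt_probl_2}), minimizes a \emph{linear} functional of $\boldsymbol{\alpha}_k$ over the probability simplex $\{\boldsymbol{\alpha}_k : \sum_{i\in\sS}\alpha_k^{(i)}=1,\ \alpha_k^{(i)}\ge 0\}$. A linear function over a simplex attains its minimum at a vertex, and the vertices of this simplex are exactly the unit vectors, i.e.\ the points with a single $\alpha_k^{(i)}=1$ and all remaining entries equal to $0$. These are precisely the feasible points of Problem~\ref{prob:problem_3}. Hence $\tilde{\boldsymbol{\alpha}}_k$ can be taken to be binary for every $k$, so the approximate CCP solution $\{\tilde{\pi}_{k|k-1}\}_{1:N}$ is itself feasible for Problem~\ref{prob:problem_3}. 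It then remains to show that this feasible point is in fact optimal.

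The second ingredient is that the cost minimized by (\ref{eqn:sub_opt_probl_2}) agrees with the cost of Problem~\ref{prob:problem_3} exactly on the binary points. First I would use the chain rule for the KL divergence together with the Markov factorizations (\ref{eqn:target})--(\ref{eqn:agent}) to write $\DKL(\pi_{1:N|0}||p_{1:N|0})=\sum_{k=1}^N\E_{\pi_{k-1:k-1}}[\DKL(\pi_{k|k-1}||p_{k|k-1})]$. Convexity of the KL divergence in its first argument gives, for any feasible $\boldsymbol{\alpha}_k$, the bound $\DKL(\sum_{i\in\sS}\alpha_k^{(i)}\policy{k|k-1}{(i)}||p_{k|k-1})\le \sum_{i\in\sS}\alpha_k^{(i)}\DKL(\policy{k|k-1}{(i)}||p_{k|k-1})$, and this inequality becomes an \emph{equality} whenever $\boldsymbol{\alpha}_k$ is a vertex, since the mixture then reduces to a single source. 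Because the reward term $\E_{\pi_{k-1:k-1}}[\tilde r_k(\bv{X}_{k-1})]=\sum_{i\in\sS}\alpha_k^{(i)}\E_{\pi_{k-1:k-1}}[\E_{\policy{k|k-1}{(i)}}[r_k(\bv{X}_k)]]$ is linear in $\boldsymbol{\alpha}_k$ in all cases, it follows that at every binary $\{\boldsymbol{\alpha}_k\}_{1:N}$ the cost of Problem~\ref{prob:problem_3} coincides with the surrogate (upper-bounding) cost that is minimized, via the backward recursion (\ref{eqn:policy_backward_probl_2}), by the programs (\ref{eqn:sub_opt_probl_2}).

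The proof is then closed by a chain of inequalities. Let $J$ denote the cost of Problem~\ref{prob:problem_3} and $\tilde J\ge J$ the surrogate cost, with equality on binary weights by the previous step. Since the backward recursion (\ref{eqn:policy_backward_probl_2}) minimizes $\tilde J$ over the whole product of simplices and, by the vertex argument, attains its minimum at the binary point $\{\tilde{\boldsymbol{\alpha}}_k\}_{1:N}$, one obtains $J(\{\tilde{\boldsymbol{\alpha}}_k\})=\tilde J(\{\tilde{\boldsymbol{\alpha}}_k\})\le \tilde J(\{\boldsymbol{\alpha}_k\})=J(\{\boldsymbol{\alpha}_k\})$ for every binary feasible $\{\boldsymbol{\alpha}_k\}_{1:N}$, which is exactly optimality for Problem~\ref{prob:problem_3}. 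The step I expect to be most delicate is establishing that the per-step decomposition and the tightness of the convexity bound survive the time coupling of the objective: the expectations $\E_{\pi_{k-1:k-1}}[\cdot]$ and the value $\bar r_k$ in (\ref{eqn:policy_backward_probl_2}) depend on weights chosen at other stages, so one must verify -- via the dynamic-programming structure underlying Lemma~\ref{thm:probl_2_sol} -- that evaluating the full time-coupled surrogate cost at binary weights reproduces the true cost $J$ stage by stage, and that the minimizer of $\tilde J$ is genuinely a vertex at each $k$ (e.g.\ resolving ties in (\ref{eqn:sub_opt_probl_2}) by selecting a vertex optimizer).
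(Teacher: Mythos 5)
Your argument is correct, but it is organized differently from the paper's. The paper proceeds in two steps: it first carries out the dynamic-programming decomposition \emph{directly on the integer problem}, showing that at each stage the feasible points are unit vectors, that the cost of choosing source $j$ collapses to $a_k^{(j)}(\bv{x}_{k-1})$ (precisely because the mixture reduces to a single source), and hence that the optimal index is $j_k^\ast\in\arg\min_{j\in\sS}a_k^{(j)}(\bv{x}_{k-1})$; it then observes that the linear program (\ref{eqn:sub_opt_probl_2}) of Lemma \ref{thm:probl_2_sol}, being linear over the standard simplex, is minimized at the vertex indexed by the same $\arg\min$, so the two solutions coincide. You instead run a relaxation ``sandwich'': the surrogate cost $\tilde J$ upper-bounds $J$ everywhere (convexity of KL in its first argument), coincides with $J$ on binary weights, and is globally minimized by Lemma \ref{thm:probl_2_sol} at a point that can be taken binary (vertex of a simplex for a linear objective); hence $J(\tilde{\boldsymbol{\alpha}})=\tilde J(\tilde{\boldsymbol{\alpha}})\le\tilde J(\boldsymbol{\alpha})=J(\boldsymbol{\alpha})$ for all binary feasible $\boldsymbol{\alpha}$. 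This is a valid and arguably more economical route, but note that it leans on Lemma \ref{thm:probl_2_sol} delivering the \emph{global} minimizer of the time-coupled surrogate over the product of simplices (which the paper's definition of ``approximate solution'' does guarantee), and the ``delicate step'' you flag at the end --- verifying that the stagewise decomposition survives the coupling through $\E_{\pi_{k-1:k-1}}[\cdot]$ and the value function $\bar r_k$ --- is exactly the content of the paper's Step 1, so a complete write-up of your route would still have to reproduce that backward-recursion argument (or cite it from \cite{9244209}) rather than leave it as a remark. The paper's route buys a self-contained derivation that also exhibits the explicit optimal index (\ref{eqn:optimal_index}); yours buys brevity and makes transparent \emph{why} the relaxation is tight, namely that the convexity bound on the KL term is an equality at every integer-feasible point.
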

\begin{proof} we first ({\bf Step $1$}) characterize the optimal solution of Problem \ref{prob:problem_3}.  Then ({\bf Step $2$}) we  show that the approximate solution of the CCP given by Lemma \ref{thm:probl_2_sol} must necessarily coincide with the optimal solution of Problem \ref{prob:problem_3}.

\noindent{\bf Step $1$.} In what follows we make use of the shorthand notation $\mathcal{J}_n(\pi_{1:n-1|0},p_{1:n-1|0})$ to denote $\DKL\left(\pi_{1:n-1|0}||p_{1:n-1|0}\right)- \sum_{k=1}^{n-1}\E_{\pi_{k-1:k-1}}\left[\tilde{r}_k(\bv{X}_{k-1})\right]$. The chain rule for pdfs/pmfs \cite{9244209}, the linearity of the expectation and the fact that $\DKL\left(\pi_{N|N-1}||p_{N|N-1}\right)$ depends only on $\bv{x}_{N-1}$ imply that the cost in (\ref{eqn:problem_3}) can be re-written as
\begin{align*}
&\mathcal{J}_N(\pi_{1:N-1|0},p_{1:N-1|0}) + \E_{\pi_{1:N-1|0}}\left[\DKL\left(\pi_{N|N-1}||p_{N|N-1}\right)\right] - \E_{\pi_{N-1:N-1}}\left[\tilde{r}_N(\bv{X}_{N-1}) \right] \\
&=\mathcal{J}_N(\pi_{1:N-1|0},p_{1:N-1|0}) + \E_{\pi_{N-1:N-1}}\left[\DKL\left(\pi_{N|N-1}||p_{N|N-1}\right)\right] - \E_{\pi_{N-1:N-1}}\left[\tilde{r}_N(\bv{X}_{N-1}) \right] \\
&=\mathcal{J}_N(\pi_{1:N-1|0},p_{1:N-1|0}) + \E_{\pi_{N-1:N-1}}\left[\DKL\left(\pi_{N|N-1}||p_{N|N-1}\right) - \tilde{r}_N(\bv{X}_{N-1}) \right].
\end{align*}
Hence,  (\ref{eqn:problem_3}) can be formulated as the sum of  two sub-problems:
\begin{equation}\label{eqn:split_1_problem_3}
    \begin{aligned}
    \underset{\left\{\boldsymbol{\alpha}_k\right\}_{1:N-1}}{\text{min}}
    &\mathcal{J}_N(\pi_{1:N-1|0},p_{1:N-1|0})\\
    s.t. & \ \pi_{k|k-1} = \sum_{i\in\sS}\alpha_k^{(i)}\policy{k|k-1}{(i)}, \  k\in1:N-1 \\
    & \sum_{i\in\sS}\alpha_k^{(i)} = 1, \ \alpha_k^{(i)} \in \{0, 1\}, \   k\in1:N-1,
    \end{aligned} 
\end{equation}
and, by letting 
\begin{equation*}
c_N(\bv{x}_{N-1}):=\DKL\left(\pi_{N|N-1}||p_{N|N-1}\right) - \E_{\pi_{N|N-1}}\left[\bar{r}_N(\bv{X}_{N})\right],\end{equation*}

where we set $\bar{r}_N(\bv{x}_N) = r_N(\bv{x}_N)$: 
\begin{equation}\label{eqn:split_2_problem_3}
    \begin{aligned}
    \underset{\boldsymbol{\alpha}_N}{\text{min}}
    & \ \E_{\pi_{N-1:N-1}}\left[c_N(\bv{X}_{N-1}) \right]\\
    s.t. & \ \pi_{N|N-1} = \sum_{i\in\sS}\alpha_N^{(i)}\policy{N|N-1}{(i)} \\
    & \sum_{i\in\sS}\alpha_N^{(i)} = 1, \ \alpha_N^{(i)} \in \{0, 1\}.
    \end{aligned} 
\end{equation}
That is, Problem \ref{prob:problem_3} can be approached by first solving (\ref{eqn:split_2_problem_3}) and then by taking into account its minimum to solve (\ref{eqn:split_1_problem_3}). Also, the optimal cost of (\ref{eqn:split_2_problem_3}) is $\E_{\pi_{N-1:N-1}}\left[c_N^\ast(\bf{X}_{N-1}))\right]$, where  $c_N^\ast(\bf{x}_{N-1})$ is the optimal cost obtained by solving:
\begin{equation}\label{eqn:step_intermediate_problem_3}
    \begin{aligned}
    \underset{\boldsymbol{\alpha}_N}{\text{min}}
    & \ c_N(\bv{x}_{N-1})\\
    s.t.& \ \pi_{N|N-1} = \sum_{i\in\sS}\alpha_N^{(i)}\policy{N|N-1}{(i)} \\
    & \sum_{i\in\sS}\alpha_N^{(i)} = 1, \ \alpha_N^{(i)} \in \{0, 1\}.
    \end{aligned} 
\end{equation}
The constraints of (\ref{eqn:step_intermediate_problem_3}) imply that its optimal solution  is a vector, say $\boldsymbol{\alpha}_N^\ast$, having all of its elements equal to $0$ except one element, say $j_N^\ast$, which is equal to $1$. Moreover, by evaluating the cost of (\ref{eqn:step_intermediate_problem_3}) at each feasible solution, we have that $j_N^\ast \in {\text{arg min}}_{j\in\sS} \ {a}_N^{(j)}(\bv{x}_{N-1})$, where we used the definition of  ${a}_k^{(j)}(\bv{x}_{k-1})$ given in Lemma  \ref{thm:probl_2_sol}. The cost of the sub-problem in (\ref{eqn:split_2_problem_3}) is then $\E_{\pi_{N-1:N-1}}\left[ \bv{a}_{N}^T(\bv{X}_{N-1}) \boldsymbol{\alpha}_{N}^\ast\right]$, where the vector $\bv{a}_{N}(\bv{x}_{N-1})$ is defined as in Lemma \ref{thm:probl_2_sol}. Now, the fact that the original problem has been reformulated as the sum of the two sub-problems (\ref{eqn:split_1_problem_3}) and (\ref{eqn:split_2_problem_3}) implies that the cost of Problem \ref{prob:problem_3} is equal to
\begin{equation*}
\mathcal{J}_N(\pi_{1:N-1|0},p_{1:N-1|0}) + \E_{\policy{N-1:N-1}{}}\left[\bv{a}_N^T(\bv{X_{N-1}})\boldsymbol{\alpha}_N^\ast\right],
\end{equation*}
which, by means of the chain rule for pdfs/pmfs and the definition of expectation, can be shown to be equal to
\begin{align*}
&\mathcal{J}_N(\pi_{1:N-1|0},p_{1:N-1|0})  + \E_{\pi_{N-2:N-2}}\left[\E_{\pi_{N-1|N-2}}\left[\bv{a}_N^T(\bv{X_{N-1}})\boldsymbol{\alpha}_N^\ast\right]\right]\\
&=\mathcal{J}_{N-1}(\pi_{1:N-2|0},p_{1:N-2|0})+ \E_{\pi_{1:N-2|0}}\left[\DKL(\pi_{N-1|N-2}||p_{N-1|N-2})\right]\\
&- \E_{\pi_{N-2:N-2}}\left[\tilde{r}_{N-1}(\bv{X}_{N-2})\right] + \E_{\pi_{N-2:N-2}}\left[\E_{\pi_{N-1|N-2}}\left[\bv{a}_N^T(\bv{X_{N-1}})\boldsymbol{\alpha}_N^\ast\right]\right]\\
&=\mathcal{J}_{N-1}(\pi_{1:N-2|0},p_{1:N-2|0})+ \E_{\pi_{N-2:N-2}}\left[\DKL(\pi_{N-1|N-2}||p_{N-1|N-2})\right]\\
&-\E_{\pi_{N-2:N-2}}\left[\E_{\pi_{N-1|N-2}}\left[r_{N-1}(\bv{X}_{N-1})-\bv{a}_N^T(\bv{X_{N-1}})\boldsymbol{\alpha}_N^\ast\right]\right].
\end{align*}
As a result, the problem can be split as the sum of:
\begin{subequations}
\begin{equation}\label{eqn:split_1_problem3_probl3}
    \begin{aligned}
    \underset{\left\{ \boldsymbol{\alpha}_k\right\}_{1:N-2}}{\text{min}}
    &\mathcal{J}_{N-1}(\pi_{1:N-2|0},p_{1:N-2|0})\\
    s.t. & \ \pi_{k|k-1} = \sum_{i\in\sS}\alpha_k^{(i)}\policy{k|k-1}{(i)}, \   k\in 1:N-2\\
    & \sum_{i\in\sS}\alpha_k^{(i)} = 1,  \alpha_k^{(i)}\in\{ 0,1\},   k\in1:N-2,\\
    \end{aligned} 
\end{equation}
\text{and} 
\begin{equation}\label{eqn:split_2_problem_4_probl3}
    \begin{aligned}
    \underset{\boldsymbol{\alpha}_{N-1}}{\text{min}}
    & \ \E_{\pi_{N-2:N-2}}\left[c_{N-1}(\bv{X}_{N-2}) \right]\\
    s.t. & \  \pi_{N-1|N-2} = \sum_{i\in\sS}\alpha_{N-1}^{(i)}\policy{N-1|N-2}{(i)}\\
    & \sum_{i\in\sS}\alpha_{N-1}^{(i)} = 1, \ \ \alpha_{N-1}^{(i)}\in \{0, 1\},  
    \end{aligned} 
\end{equation}
\end{subequations}
with 
\begin{align*}
&c_{N-1}(\bv{x}_{N-2}):=\DKL\left(\pi_{N-1|N-2}||p_{N-1|N-2}\right) -\E_{\pi_{N-1|N-2}}\left[ \bar{r}_{N-1}(\bv{X}_{N-1})\right],\\ &\bar{r}_{N-1}(\bv{x}_{N-1}) = {r}_{N-1}(\bv{x}_{N-1}) + \hat{r}_{N-1}(\bv{x}_{N-1}) \\
&\hat{r}_{N-1}(\bv{x}_{N-1} ):=-\bv{a}_N^T(\bv{x}_{N-1})\boldsymbol{\alpha}_N^\ast.
\end{align*}
Again, the optimal solution of  (\ref{eqn:split_2_problem_4_probl3}) is the vector $\boldsymbol{\alpha}_{N-1}^\ast$ having all of its elements equal to $0$ except element $j_{N-1}^\ast$, which is equal to $1$. Moreover, this time we have $j_{N-1}^\ast \in  {\text{arg min}}_{j\in\sS} \ {a}_{N-1}^{(j)}(\bv{x}_{N-2})$,
    and the cost of  (\ref{eqn:split_2_problem_4_probl3}) is given by $\E_{\pi_{N-2:N-2}}\left[ \bv{a}^T_{N-1}(\bv{X}_{N-2}) \boldsymbol{\alpha}_{N-1}^\ast\right]$. By iterating the above arguments, we find that, $\forall k \in  1:N-2$, Problem \ref{prob:problem_3} can always be split as the sum of two sub-problems, where the problem for the last $k$ can be solved by finding:
\begin{equation*}
    \begin{aligned}
    \underset{\boldsymbol{\alpha}_k}{\text{min}}
    & \ \DKL\left(\pi_{k|k-1}||p_{k|k-1}\right) - \E_{\pi_{k|k-1}}\left[\bar{r}_k(\bv{X}_{k})\right]\\
    s.t. & \ \pi_{k|k-1} = \sum_{i\in\sS}\alpha_k^{(i)}\policy{k|k-1}{(i)}\\
    & \sum_{i\in\sS}\alpha_k^{(i)} = 1, \ \ \alpha_k^{(i)} \in \{0, 1\}, 
    \end{aligned}
\end{equation*}
with $\bar r_k(\bv{x}_{k}) := r_k(\bv{x}_k) + \hat{r}_k(\bv{x}_k)$ and $\hat{r}_k(\bv{x}_k):= -  \bv{a}^T_{k+1}(\bv{x}_{k}) \boldsymbol{\alpha}_{k+1}^\ast$. Hence, the optimal solution of Problem \ref{prob:problem_3} is, $\forall k$, the vector  $\boldsymbol{\alpha}_{k}^\ast$ that has all of its elements equal to $0$, except one, say $j_k^\ast$, equal to $1$. Moreover, at time step $k$:
\begin{equation}\label{eqn:optimal_index}
    \begin{aligned}
  j_k^\ast \in  \underset{j\in\sS}{\text{arg min}}
    & \ {a}_k^{(j)}(\bv{x}_{k-1}).
    \end{aligned} 
\end{equation}

\noindent{\bf Step $2$.} We now consider the approximate solution from  Lemma \ref{thm:probl_2_sol}, obtained by solving, at each $k$, the problem in (\ref{eqn:sub_opt_probl_2}). Note that this is a linear problem with the standard simplex as feasibility domain. Also, note that  the vertices of the standard simplex all have entries equal to $0$, except one equal to $1$. Hence, the solution of such a problem at each $k$ is the vector, $ \tilde{\boldsymbol{\alpha}}_k$ having all of its elements equal to $0$, except element $j_k$ which is equal to $1$. Moreover, the element $j_k$ solving the problem in (\ref{eqn:sub_opt_probl_2}) corresponds to the index of the smallest element in the vector $ \bv{a}_k(\bv{x}_{k-1})$. This is, however, given by (\ref{eqn:optimal_index}) and hence, $\forall k$, $j_k = j_k^\ast$.
\end{proof}
\section{Regret analysis}
We define the regret of an agent that follows $\left\{\tilde{\pi}_{k|k-1}\right\}_{1:N}$:
\begin{Definition}\label{def:regret}
consider the cost functional $\mathcal{C}\left\{\pi_{1:N|0}\right\}:= \DKL\left(\pi_{1:N|0}||p_{1:N|0}\right) - \sum_{k=1}^N\E_{\pi_{k-1:k-1}}\left[\tilde{r}_k(\bv{X}_{k-1})\right]$. Then, the regret of the agent is
$\bar{R}_{1:N|0}:=\mathcal{C}\left\{\tilde\pi_{1:N|0}\right\} - \mathcal{C}\left\{\bar{\pi}_{1:N|0}\right\}$.
\end{Definition}
In the above definition, $\bar{\pi}_{1:N|0}$ is the solution to the following 
\begin{Problem}\label{prob:problem_1}
find the sequence $\{\bar{\pi}_{k|k-1}\}_{1:N}$ solving
\begin{equation}\label{eqn:problem_1}
    \begin{aligned}
    \underset{\left\{ \pi_{k|k-1}\right\}_{1:N}}{\text{min}}
    &\DKL\left(\pi_{1:N|0}||p_{1:N|0}\right) - \sum_{k=1}^N\E_{\pi_{k-1:k-1}}\left[\tilde{r}_k(\bv{X}_{k-1})\right]\\
    s.t. & \ \pi_{k|k-1}\in\sD, \  \ \forall k.
    \end{aligned}
\end{equation}
\end{Problem}
Problem \ref{prob:problem_1} is an infinite-dimensional relaxation of Problem \ref{prob:problem_3} and its solution is an {oracle} for such a problem.
\begin{Definition}\label{def:oracle}
 $\bar{\pi}_{0:N} := \pi_{0:0}\prod_{k=1}^N\bar{\pi}_{k|k-1}=\pi_{0:0}\bar{\pi}_{1:N|0}$,
is an {oracle} for Problem \ref{prob:problem_3} if $\mathcal{C}\left\{\bar{\pi}_{1:N|0}\right\}\le  \mathcal{C}\left\{{\hat\pi}_{1:N|0}\right\}$, $\forall \hat\pi_{1:N|0}\in\sD$.
\end{Definition}
That is, by definition, no agent can obtain a better (i.e. lower) cost than the oracle.
The next result gives an explicit expression for the oracle.
\begin{Lemma}\label{thm:probl_1_sol} the  solution of Problem \ref{prob:problem_1} is
 $\left\{ \bar\pi_{k|k-1}\right\}_{1:N}$, with
\begin{equation}\label{eqn:policy_probl_1}
 \bar\pi_{k|k-1} = \frac{p_{k|k-1}\exp\left(\bar{\rho}_k(\bv{x}_k)\right)}{\int p_{k|k-1}\exp\left(\bar{\rho}_k(\bv{x}_k)\right)d\bv{x}_k},
\end{equation}
where $\bar{\rho}_k(\bv{x}_k) := {r}_k(\bv{x}_k) + \hat{\rho}_k(\bv{x}_k)$, and 
\begin{equation}\label{eqn:policy_backward_probl_1}
\begin{split}
& \hat{\rho}_k(\bv{x}_k) := \ln\E_{p_{k+1|k}}\left[\exp(\bar{\rho}_{k+1}(\bv{X}_{k+1}))\right], \\ 
& \bar{\rho}_{N+1}(\bv{x}_{N+1}) =0.
\end{split}
\end{equation}
\end{Lemma}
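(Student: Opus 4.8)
The plan is to solve Problem~\ref{prob:problem_1} by backward dynamic programming, mirroring the stage decomposition used in Step~$1$ of the proof of Theorem~\ref{thm:problem_3} but with the inner per-stage minimisation now carried out over all of $\sD$ rather than over the vertices of the simplex. First I would put the cost in (\ref{eqn:problem_1}) into stage-additive form. Using the chain rule for the KL divergence together with the Markov factorisations (\ref{eqn:target})--(\ref{eqn:agent}) and the reward identity $\E_{\pi_{k-1:k-1}}[\tilde r_k(\bv{X}_{k-1})]=\E_{\pi_{k-1:k-1}}\left[\E_{\pi_{k|k-1}}[r_k(\bv{X}_k)]\right]$, the objective becomes
\[
\sum_{k=1}^N \E_{\pi_{k-1:k-1}}\!\left[\DKL\left(\pi_{k|k-1}||p_{k|k-1}\right) - \E_{\pi_{k|k-1}}\left[r_k(\bv{X}_k)\right]\right],
\]
so that each conditional $\pi_{k|k-1}$ enters through a local term weighted by the marginal $\pi_{k-1:k-1}$.

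The core technical step I would isolate is the Gibbs variational identity: for a fixed reference $q\in\sD$ and a function $f$, minimising $\DKL(\pi||q)-\E_{\pi}[f]$ over $\pi\in\sD$ yields the unique minimiser $\pi^\star = q\exp(f)/\int q\exp(f)\,d\bv{x}$ with optimal value $-\ln\int q\exp(f)\,d\bv{x}$. This follows by ``completing the KL'': writing $\DKL(\pi||q)-\E_{\pi}[f]=\DKL(\pi||\pi^\star)-\ln Z$ with $Z:=\int q\exp(f)\,d\bv{x}$, and invoking $\DKL(\pi||\pi^\star)\ge 0$ with equality iff $\pi=\pi^\star$. I would then run the recursion backwards. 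At $k=N$ the only term depending on $\pi_{N|N-1}$ is the innermost one; for each fixed $\bv{x}_{N-1}$ the identity with $q=p_{N|N-1}$, $f=r_N$ gives $\bar\pi_{N|N-1}$ of the stated form with $\bar\rho_N=r_N$ (consistent with $\hat\rho_N=\ln\E_{p_{N+1|N}}[\exp(\bar\rho_{N+1})]=0$ since $\bar\rho_{N+1}=0$), and optimal value $-\hat\rho_{N-1}(\bv{x}_{N-1})$ where $\hat\rho_{N-1}=\ln\E_{p_{N|N-1}}[\exp(\bar\rho_N)]$. Since this value depends on $\bv{x}_{N-1}$ alone and is averaged under $\pi_{N-1:N-1}$, it folds into the stage-$(N-1)$ term as the augmented reward $\bar\rho_{N-1}=r_{N-1}+\hat\rho_{N-1}$; applying the identity again with $q=p_{N-1|N-2}$, $f=\bar\rho_{N-1}$ returns $\bar\pi_{N-1|N-2}$ and cost-to-go $-\hat\rho_{N-2}$. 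Iterating for $k=N,N-1,\dots,1$ reproduces exactly the recursion (\ref{eqn:policy_backward_probl_1}) and the closed form (\ref{eqn:policy_probl_1}).

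I expect the main obstacle to be justifying the Bellman decomposition rigorously: unlike the finite-dimensional setting, the later-stage KL terms are weighted by marginals that themselves depend on the earlier conditionals, so the stages are coupled. The argument that makes the recursion close is that the stage-$k$ minimisation can be performed pointwise in $\bv{x}_{k-1}$ — the nonnegative weight $\pi_{k-1:k-1}$ does not depend on $\pi_{k|k-1}$, so minimising the integrand for each $\bv{x}_{k-1}$ minimises its expectation — and that the resulting optimal value is a function of $\bv{x}_{k-1}$ only. This is precisely what allows the log-sum-exp cost-to-go to be absorbed into the preceding stage's reward. I would also record that each per-stage problem is convex (the KL term is convex and the reward term linear in $\pi_{k|k-1}$ over the convex set $\sD$), so the stationary point is the unique global minimiser, and that Assumption~\ref{asn:source_policies} together with the finiteness of the integrals assumed in the preliminaries guarantees every term is well defined throughout the manipulations.
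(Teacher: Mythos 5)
Your proposal is correct, and it follows the same backward stage decomposition as the paper's proof: both split the cost via the chain rule, reduce the innermost sub-problem to a minimisation performed pointwise in $\bv{x}_{k-1}$, and absorb the resulting optimal value $-\hat{\rho}_{k-1}(\bv{x}_{k-1})$ into the preceding stage's reward to close the recursion. Where you genuinely differ is in how the per-stage problem over $\sD$ is solved. The paper forms the Lagrangian of the constrained problem, imposes first-order stationarity with respect to $\pi_{k|k-1}$ and the normalisation multiplier $\lambda_k$, and then solves for $\lambda_k$ to obtain the partition function. You instead invoke the Gibbs variational identity, writing $\DKL\left(\pi||q\right)-\E_{\pi}\left[f\right]=\DKL\left(\pi||\pi^\star\right)-\ln Z$ with $\pi^\star=q\exp(f)/Z$ and concluding from $\DKL\left(\pi||\pi^\star\right)\ge 0$. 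Both routes produce the same tilted density and the same optimal value $-\ln Z$, but your argument is more elementary and self-contained: it sidesteps the calculus-of-variations formalities needed to rigorously justify stationarity conditions in an infinite-dimensional space, and it delivers global optimality and uniqueness of the minimiser directly from the nonnegativity of the KL divergence rather than from an appeal to convexity plus a stationary point. The paper's Lagrangian derivation, in turn, exhibits the multiplier explicitly and matches the style of derivation used in the crowdsourcing-control references. Your explicit observation that the pointwise-in-$\bv{x}_{k-1}$ minimisation is what makes the Bellman decomposition legitimate is precisely the (largely implicit) justification underlying the paper's iteration, so nothing is missing.
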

\begin{proof} 
See the Appendix.
\end{proof}
\begin{Remark}
\textcolor{black}{Lemma \ref{thm:probl_2_sol} and Prop. \ref{prop:regret_upper} (see next) are stated for pdfs. These results can also be stated for pmfs by replacing integrals with sums (statements omitted here for brevity).}
\end{Remark}
\subsubsection*{Regret bound} we now give an upper bound for $\bar{R}_{1:N|0}$. In doing so, we make use of the following set of assumptions.
\begin{Assumption}\label{asn:contributors} for each $i$,  
 $\left\{\policy{k|k-1}{(i)}\right\}_{1:N}$ is the solution of
\begin{equation}\label{eqn:problem_contributors}
    \begin{aligned}
    \underset{\left\{ \pi_{k|k-1}\right\}_{1:N}}{\text{min}}
    &\DKL\left(\pi_{1:N|0}||p_{1:N|0}^{(i)}\right) - \sum_{k=1}^N\E_{\pi_{k-1:k-1}}\left[\tilde{r}^{(i)}_k(\bv{X}_{k-1})\right]\\
    s.t. & \ \pi_{k|k-1}\in\sD, \  \ \forall k,
    \end{aligned}
\end{equation}
where $p_{1:N|0}^{(i)}:= \prod_{k=1}^Np^{(i)}_{k|k-1}$ and $\tilde{r}^{(i)}_k(\bv{x}_{k-1}) = \E_{\pi_{k|k-1}}\left[r_k^{(i)}(\bv{X}_{k})\right]$ are target and reward of the $i$-th source.
\end{Assumption}
Assumption \ref{asn:contributors} states that the behavior of the sources is optimal for a task, which is (in general) different from that of the agent (the cost in (\ref{eqn:problem_contributors}) is different from the cost in Prob. \ref{prob:problem_3}). Any behavior can be formally obtained as a solution of a problem of the form of (\ref{eqn:problem_contributors}). Indeed, let  $\pi_{1:N|0}^{(i)}= \prod_{k=1}^N\policy{k|k-1}{(i)}$. Then, this behavior is the solution of a problem of the form of (\ref{eqn:problem_contributors}) having $p_{1:N|0}^{(i)}=\pi_{1:N|0}^{(i)}$ and zero reward.
Assumption \ref{asn:contributors} and Lemma \ref{thm:probl_1_sol} imply that the behavior provided by the $i$-th source is of the form:
\begin{equation}\label{eqn:behavior_contributor_optimal}
\begin{split}
\pi_{1:N|0}^{(i)} & =  \prod_{k=1}^N\frac{p_{k|k-1}^{(i)}\exp\left(\bar{r}_k^{(i)}(\bv{x}_k)\right)}{\int p_{k|k-1}^{(i)}\exp\left(\bar{r}_k^{(i)}(\bv{x}_k)\right)d\bv{x}_k},
\end{split}
\end{equation}
with $\bar{r}_{N+1}^{(i)}(\bv{x}_{N+1})  = 0$ and: 
\begin{equation}\label{eqn:contributor_cumulative_rewards}
\begin{split}
& \bar{r}_k^{(i)}(\bv{x}_k) = r_k^{(i)}(\bv{x}_k) + \hat{r}_k^{(i)}(\bv{x}_k),\\
& \hat{r}_k^{(i)}(\bv{x}_k)  = \ln\E_{p_{k+1|k}^{(i)}}\left[\exp(\bar{r}_{k+1}^{(i)}(\bv{X}_{k+1}))\right].
\end{split}
\end{equation}
We let $j_k$ be the source picked, at time step $k$, by an agent that follows $\left\{\tilde{\pi}_{k|k-1}\right\}_{1:N}$. We can now state the following
\begin{Assumption}\label{asn:log-likelihood}
{$\forall k$, there exist bounded constants, $l_k$ and $L_k$, such that $\forall \bv{x}_k\in\mathcal{X}$, ${l}_k\le \ln({p^{(j_k)}_{k|k-1}}/{p_{k|k-1}})\le L_k$.}
\end{Assumption}
\begin{Assumption}\label{asn:rewards}
{$\forall k$, there exists a bounded constant, say $R_k$, such that $\forall \bv{x}_k\in\mathcal{X}$, $\abs{r_{k}^{(j_k)}(\bv{x}_k)-r_k(\bv{x}_k)}\le R_k$.}
\end{Assumption}
\begin{Remark}
Asn. \ref{asn:log-likelihood} implies that the log-likelihood ratio $\Lambda_{1:N}:=\ln{\prod_{k=1}^N p_{k|k-1}^{(j_k)}}/{\prod_{k=1}^Np_{k|k-1}}$ is bounded. Asn. \ref{asn:rewards} is on the difference between the rewards of the sources and that of the agent. This is fulfilled when e.g. rewards are bounded.
\end{Remark}
We are now ready to state the following result.
\begin{Proposition}\label{prop:regret_upper}
let Assumption \ref{asn:contributors} - \ref{asn:rewards} hold. Then, $\bar{R}_{1:N|0} \le \sum_{k=1}^N\left(L_k-l_k+ 2R_k\right)$.
\end{Proposition}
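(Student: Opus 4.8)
The plan is to write the regret as the difference of the two costs, evaluate each by exploiting the closed-form structure of the optimal policies, and then bound the difference term by term using Assumptions \ref{asn:log-likelihood} and \ref{asn:rewards}. First I would use the chain rule for pdfs/pmfs to express both $\mathcal{C}\{\tilde\pi_{1:N|0}\}$ and $\mathcal{C}\{\bar\pi_{1:N|0}\}$ as sums over $k$ of the single-step quantities $\E[\ln(\pi_{k|k-1}/p_{k|k-1}) - r_k(\bv{X}_k)]$, taken under the respective trajectory laws. Since $\bar\pi_{1:N|0}$ is the oracle, the regret is nonnegative, so the task is genuinely an upper bound.

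For the oracle term I would substitute the explicit form (\ref{eqn:policy_probl_1}) of Lemma \ref{thm:probl_1_sol}. Writing $\ln\bar\pi_{k|k-1} = \ln p_{k|k-1} + \bar\rho_k(\bv{x}_k) - \hat\rho_{k-1}(\bv{x}_{k-1})$ and using $\bar\rho_k = r_k + \hat\rho_k$, each single-step integrand collapses to $\hat\rho_k(\bv{x}_k) - \hat\rho_{k-1}(\bv{x}_{k-1})$, so the sum telescopes and, with $\bar\rho_{N+1}=0$, yields $\mathcal{C}\{\bar\pi_{1:N|0}\} = -\hat\rho_0(\bv{x}_0)$. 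For the agent's term I would use Assumption \ref{asn:contributors} together with Lemma \ref{thm:probl_1_sol} in the induced form (\ref{eqn:behavior_contributor_optimal}) to write $\tilde\pi_{k|k-1} = \policy{k|k-1}{(j_k)}$ explicitly in terms of the source's target $p^{(j_k)}_{k|k-1}$ and reward $r^{(j_k)}_k$. The corresponding single-step integrand then becomes $\ln(p^{(j_k)}_{k|k-1}/p_{k|k-1}) + (r^{(j_k)}_k - r_k) + \hat r^{(j_k)}_k(\bv{x}_k) - \hat r^{(j_k)}_{k-1}(\bv{x}_{k-1})$, where the first summand is controlled by Assumption \ref{asn:log-likelihood} and the second by Assumption \ref{asn:rewards}.

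The crux, and the main obstacle, is the cumulative-reward terms: because the picked source $j_k$ changes with $k$, the $\hat r^{(j_k)}$ do not telescope the way the oracle's $\hat\rho$ do, and so cannot simply be cancelled. I would handle this by comparing the two backward recursions, the oracle's $\hat\rho_k = \ln\E_{p_{k+1|k}}[\exp(\bar\rho_{k+1})]$ and the source's $\hat r^{(j)}_k = \ln\E_{p^{(j)}_{k+1|k}}[\exp(\bar r^{(j)}_{k+1})]$, and propagating the per-step discrepancies through these log-sum-exp backups. The two elementary facts I would invoke are (i) monotonicity and translation invariance of $\ln\E[\exp(\cdot)]$, so that shifting the exponent pointwise by a bounded constant shifts the log-partition by the same constant, and (ii) that changing the base measure from $p^{(j_k)}_{k+1|k}$ to $p_{k+1|k}$ multiplies the expectation by the likelihood ratio, which by Assumption \ref{asn:log-likelihood} lies in $[\exp(l_k),\exp(L_k)]$ and hence shifts the log-partition by at most $L_k - l_k$.

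Collecting the contributions, the target mismatch enters through the instantaneous $\ln(p^{(j_k)}_{k|k-1}/p_{k|k-1})$ and through the measure change in the backups, and the two-sided bound $[l_k,L_k]$ yields a per-step penalty of $L_k - l_k$; the reward mismatch enters twice, once through the instantaneous $r^{(j_k)}_k - r_k$ and once through the reward appearing inside the backup, each bounded by $R_k$, which is the origin of the factor $2R_k$. Since the bounds of Assumptions \ref{asn:log-likelihood} and \ref{asn:rewards} are uniform in $\bv{x}_k$, taking the trajectory expectations is harmless and preserves the per-step bounds. Summing over $k$ then gives $\bar{R}_{1:N|0} \le \sum_{k=1}^N\left(L_k-l_k+ 2R_k\right)$, as claimed.
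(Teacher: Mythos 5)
Your exact-evaluation strategy is sound up to the point you yourself flag as the crux: the telescoping computation giving $\mathcal{C}\{\bar\pi_{1:N|0}\}=-\E_{\pi_{0:0}}[\hat\rho_0(\bv{X}_0)]$ is correct, as is the per-step decomposition of $\mathcal{C}\{\tilde\pi_{1:N|0}\}$ into $\ln(p^{(j_k)}_{k|k-1}/p_{k|k-1})+(r^{(j_k)}_k-r_k)+\hat r^{(j_k)}_k(\bv{x}_k)-\hat r^{(j_k)}_{k-1}(\bv{x}_{k-1})$. The gap is in your plan for the non-telescoping terms. Unrolling the backup $\hat r^{(j_k)}_m=\ln\E_{p^{(j_k)}_{m+1|m}}[\exp(\bar r^{(j_k)}_{m+1})]$ downward from $m=k$ requires controlling $\ln(p^{(j_k)}_{m|m-1}/p_{m|m-1})$ and $r^{(j_k)}_m-r_m$ for all $m>k$, i.e.\ the behavior of source $j_k$ at \emph{future} times; Assumptions \ref{asn:log-likelihood}--\ref{asn:rewards} only bound these quantities for the source $j_m$ actually picked at time $m$. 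Moreover, even if you strengthen the assumptions to hold uniformly over sources, each level of the log-sum-exp backup contributes its own measure-change and reward penalty, so the discrepancy between $\hat r^{(j_k)}_k$ and $\hat\rho_k$ accumulates to order $\sum_{m=k+1}^{N}(L_m-l_m+R_m)$; summing over $k$ (the terms $\hat r_m^{(j_m)}-\hat r_m^{(j_{m+1})}$ do not cancel when the picked source changes) yields a bound quadratic in the horizon, not the claimed $\sum_{k=1}^N(L_k-l_k+2R_k)$.

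The paper's proof avoids comparing the two backward recursions altogether. Instead of evaluating $\DKL(\bar\pi_{1:N|0}||p_{1:N|0})$ exactly, it lower-bounds it via the Gibbs-type inequality
\begin{equation*}
\DKL\left(\bar\pi_{1:N|0}||p_{1:N|0}\right)\ \ge\ \DKL\left(\tilde\pi_{1:N|0}||p_{1:N|0}\right)+\langle \bar\pi_{1:N|0}-\tilde\pi_{1:N|0},\ \ln\tfrac{\tilde\pi_{1:N|0}}{p_{1:N|0}}\rangle,
\end{equation*}
which follows from $\DKL(\bar\pi_{1:N|0}||\tilde\pi_{1:N|0})\ge 0$. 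After substituting the explicit form (\ref{eqn:behavior_contributor_optimal})--(\ref{eqn:contributor_cumulative_rewards}) of $\tilde\pi_{1:N|0}$, every cumulative-reward term enters only as a difference of expectations of the \emph{same} function under $\tilde\pi$ and $\bar\pi$; these pair off across consecutive time steps and collapse to a boundary term at $k=0$, which vanishes because the agent and the oracle share the initial pdf/pmf. What survives is the instantaneous log-likelihood ratio integrated against the signed measure $\tilde\pi_{1:N|0}-\bar\pi_{1:N|0}$ (whence the two-sided $L_k-l_k$) and the instantaneous reward mismatches under the two laws (whence $2R_k$), which are exactly the quantities Assumptions \ref{asn:log-likelihood} and \ref{asn:rewards} control. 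To salvage your route you would need either uniform-in-source and uniform-in-time versions of those assumptions together with an argument preventing the accumulation, or to adopt this linearization step.
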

\begin{proof}the result is proved by obtaining an explicit expression for the regret, which is then upper-bounded by leveraging Assumption \ref{asn:contributors} and Lemma \ref{thm:probl_1_sol}. The desired bound is finally obtained by means of Assumption \ref{asn:rewards} and Assumption \ref{asn:log-likelihood}. Following Def. \ref{def:regret}, $\bar{R}_{1:N|0}$ is given by: 
\begin{equation}\label{eqn:regret_def}
\DKL\left(\tilde\pi_{1:N|0}||p_{1:N|0}\right) + \sum_{k=1}^N\E_{\bar\pi_{k-1:k-1}}\left[\E_{\bar\pi_{k|k-1}}\left[{r}_k(\bv{X}_{k})\right]\right] - \DKL\left(\bar\pi_{1:N|0}||p_{1:N|0}\right) -\sum_{k=1}^N \E_{\tilde\pi_{k-1:k-1}}\left[\E_{\tilde\pi_{k|k-1}}\left[{r}_k(\bv{X}_{k})\right]\right].
\end{equation}
Note that (using $d\bv{x}_{1:N}$ as a shorthand notation for $d\bv{x}_1\ldots d\bf{x}_N $): 
\begin{align*}
&\DKL\left(\bar\pi_{1:N|0}||p_{1:N|0}\right)  \\&:= \int \bar\pi_{1:N|0}\ln\frac{\bar\pi_{1:N|0}}{p_{1:N|0}}d\bv{x}_{1:N} \\
&= \int \bar\pi_{1:N|0}\ln\frac{\tilde\pi_{1:N|0}}{p_{1:N|0}}d\bv{x}_{1:N} + \int \bar\pi_{1:N|0}\ln\frac{\bar\pi_{1:N|0}}{\tilde\pi_{1:N|0}}d\bv{x}_{1:N}\\
&\ge \int \tilde\pi_{1:N|0}\ln\frac{\tilde\pi_{1:N|0}}{p_{1:N|0}}d\bv{x}_{1:N} + \int \left(\bar\pi_{1:N|0} - \tilde\pi_{1:N|0}\right)\ln\frac{\tilde\pi_{1:N|0}}{p_{1:N|0}}d\bv{x}_{1:N}  \\
&= \DKL\left(\tilde\pi_{1:N|0}||p_{1:N|0}\right) + \int \left(\bar\pi_{1:N|0} - \tilde\pi_{1:N|0}\right)\ln\frac{\tilde\pi_{1:N|0}}{p_{1:N|0}}d\bv{x}_{1:N},
\end{align*}
where we used the fact that 
\begin{equation*}
\int \bar\pi_{1:N|0}\ln\frac{\bar\pi_{1:N|0}}{\tilde\pi_{1:N|0}}d\bv{x}_{1:N} = \DKL\left( \bar\pi_{1:N|0}||\tilde\pi_{1:N|0}\right)\ge 0.
\end{equation*}As a result, the expression in (\ref{eqn:regret_def}) is upper bounded by
\begin{equation*}
-\langle \bar\pi_{1:N|0} - \tilde\pi_{1:N|0}, \ln\frac{\tilde\pi_{1:N|0}}{p_{1:N|0}}\rangle + \sum_{k=1}^N(\E_{\bar\pi_{k-1:k-1}}\left[\tilde{r}_k(\bv{X}_{k-1})\right] - \E_{\tilde\pi_{k-1:k-1}}\left[\tilde{r}_k(\bv{X}_{k-1})\right]).
\end{equation*}
Assumption \ref{asn:contributors} and Lemma \ref{thm:probl_1_sol} imply that the behaviors  from the sources are of the form (\ref{eqn:behavior_contributor_optimal}) - (\ref{eqn:contributor_cumulative_rewards}). Hence we get that $-\langle \bar\pi_{1:N|0} - \tilde\pi_{1:N|0}, \ln\frac{\tilde\pi_{1:N|0}}{p_{1:N|0}} \rangle$ equals 
\begin{equation*}
\int \left(\tilde\pi_{1:N|0} - \bar\pi_{1:N|0}\right)\left(\ln\frac{\prod_{k=1}^N p_{k|k-1}^{(j_k)}}{\prod_{k=1}^Np_{k|k-1}}+\sum_{k=1}^N\bar{r}_k^{(j_k)}(\bv{x}_k)\right)d\bv{x}_{1:N}- \int\ \left(\tilde\pi_{1:N|0} - \bar\pi_{1:N|0}\right)\sum_{k=1}^N\hat{r}_{k-1}^{(j_k)}(\bv{x}_{k-1})d\bv{x}_{1:N},
\end{equation*}

thus implying that (\ref{eqn:regret_def}) is upper bounded by
\begin{equation}\label{eqn:regret_temporary}
\begin{split}
&\langle \tilde\pi_{1:N|0} - \bar\pi_{1:N|0}, \Lambda_{1:N}\rangle   + \E_{\tilde\pi_{1:N|0}}\left[\sum_{k=1}^N\bar{r}_k^{(j_k)}(\bv{X}_k)\right]  + \sum_{k=1}^N\E_{\bar\pi_{k:k}}\left[{r}_k(\bv{X}_{k})\right]  - \E_{\bar\pi_{1:N|0}}\left[\sum_{k=1}^N\bar{r}_k^{(j_k)}(\bv{X}_k)\right] \\
& - \sum_{k=1}^N\E_{\tilde\pi_{k:k}}\left[{r}_k(\bv{X}_{k})\right]  + \E_{\bar\pi_{1:N|0}}\left[\sum_{k=1}^N\hat{r}_{k-1}^{(j_k)}(\bv{X}_{k-1})\right] - \E_{\tilde{\pi}_{1:N|0}}\left[\sum_{k=1}^N\hat{r}_{k-1}^{(j_k)}(\bv{X}_{k-1})\right].
\end{split}
\end{equation}
Moreover, linearity of the expectation together with the fact that $\bar{r}_k^{(j_k)}(\cdot)$ only depends on the state at time step $k$ implies that: 
\begin{alignat*}{2}
&\E_{\tilde\pi_{1:N|0}}\left[\sum_{k=1}^N\bar{r}_k^{(j_k)}(\bv{X}_k)\right]   & &= \sum_{k=1}^N \E_{\tilde\pi_{1:N|0}}\left[\bar{r}_k^{(j_k)}(\bv{X}_k)\right]\\ 
& & &=\sum_{k=1}^N\E_{\tilde\pi_{k:k}}\left[\bar{r}_k^{(j_k)}(\bv{X}_k)\right]. \\
\end{alignat*}
Likewise, we have:
\begin{alignat*}{2}
&  \E_{\bar\pi_{1:N|0}}\left[\sum_{k=1}^N\bar{r}_k^{(j_k)}(\bv{X}_k)\right] & &= \sum_{k=1}^N \E_{\bar\pi_{k:k}}\left[\bar{r}_k^{(j_k)}(\bv{X}_k)\right]; 
\\& \E_{\bar\pi_{1:N|0}}\left[\sum_{k=1}^N\hat{r}_{k-1}^{(j_k)}(\bv{X}_{k-1})\right]  & &=\sum_{k=1}^N\E_{\bar\pi_{k-1:k-1}}\left[\hat{r}_{k-1}^{(j_k)}(\bv{X}_{k-1})\right]; 
\\& \E_{\tilde{\pi}_{1:N|0}}\left[\sum_{k=1}^N\hat{r}_{k-1}^{(j_k)}(\bv{X}_{k-1})\right] & & = \sum_{k=1}^N \E_{\tilde{\pi}_{k-1:k-1}}\left[\hat{r}_{k-1}^{(j_k)}(\bv{X}_{k-1})\right].
\end{alignat*}
These relationships, together with (\ref{eqn:regret_temporary}) and the first relationship in (\ref{eqn:contributor_cumulative_rewards}), lead to the following upper bound for $\bar{R}_{1:N|0}$:
\begin{equation}\label{eqn:regret_upper_temporary}
\begin{split}
&\langle \tilde\pi_{1:N|0} - \bar\pi_{1:N|0}, \Lambda_{1:N}\rangle + \sum_{k=1}^N\E_{\tilde\pi_{k:k}}\left[r_k^{(j_k)}(\bv{X}_k)- {r}_k(\bv{X}_{k})\right] +   \sum_{k=1}^N \E_{\bar\pi_{k:k}}\left[{r}_k(\bv{X}_{k}) - r_k^{(j_k)}(\bv{X}_k)\right]\\
& + \sum_{k=1}^N\left(\E_{\tilde{\pi}_{k:k}}\left[\hat{r}_k^{(j_k)}(\bv{X}_k)\right] - \E_{{\bar\pi}_{k:k}}\left[\hat{r}_k^{(j_k)}(\bv{X}_k)\right]\right)+   \sum_{k=1}^N \left(\E_{{\bar\pi}_{k-1:k-1}}\left[\hat{r}_{k-1}^{(j_k)}(\bv{X}_{k-1})\right]- \E_{\tilde{\pi}_{k-1:k-1}}\left[\hat{r}_{k-1}^{(j_k)}(\bv{X}_{k-1})\right]\right).
\end{split}
\end{equation}
Now, the sums in the last two lines of (\ref{eqn:regret_upper_temporary}) can be recast as 
$\E_{\bar\pi_{0:0}}\left[\hat{r}_0^{(j_0)}(\bv{X}_0)\right] - \E_{\tilde\pi_{0:0}}\left[\hat{r}_0^{(j_0)}(\bv{X}_0)\right]$.
Since (see Def. \ref{def:oracle}) the agent and the oracle have the same initial pdf/pmf, then this means that (\ref{eqn:regret_upper_temporary}) equals 
\begin{equation*}
\langle \tilde\pi_{1:N|0} - \bar\pi_{1:N|0}, \Lambda_{1:N}\rangle + \sum_{k=1}^N\E_{\tilde\pi_{k:k}}\left[r_k^{(j_k)}(\bv{X}_k)- {r}_k(\bv{X}_{k})\right] + \sum_{k=1}^N \E_{\bar\pi_{k:k}}\left[{r}_k(\bv{X}_{k}) - r_k^{(j_k)}(\bv{X}_k)\right].
\end{equation*}
Prop. \ref{prop:regret_upper}  follows by noticing that, for such expression: (i)  from Assumption \ref{asn:rewards}, the sums are upper bounded by  $2\sum_{k=1}^NR_k$; (ii) from Assumption \ref{asn:log-likelihood}, $\langle \tilde\pi_{1:N|0} - \bar\pi_{1:N|0}, \Lambda_{1:N}\rangle \le \sum_{k=1}^N\left(L_k-l_k\right)$.
\end{proof}
\begin{figure}[b!]
\centering
    \includegraphics[trim=0cm 3cm 0cm 0cm,clip,width=0.6\columnwidth]{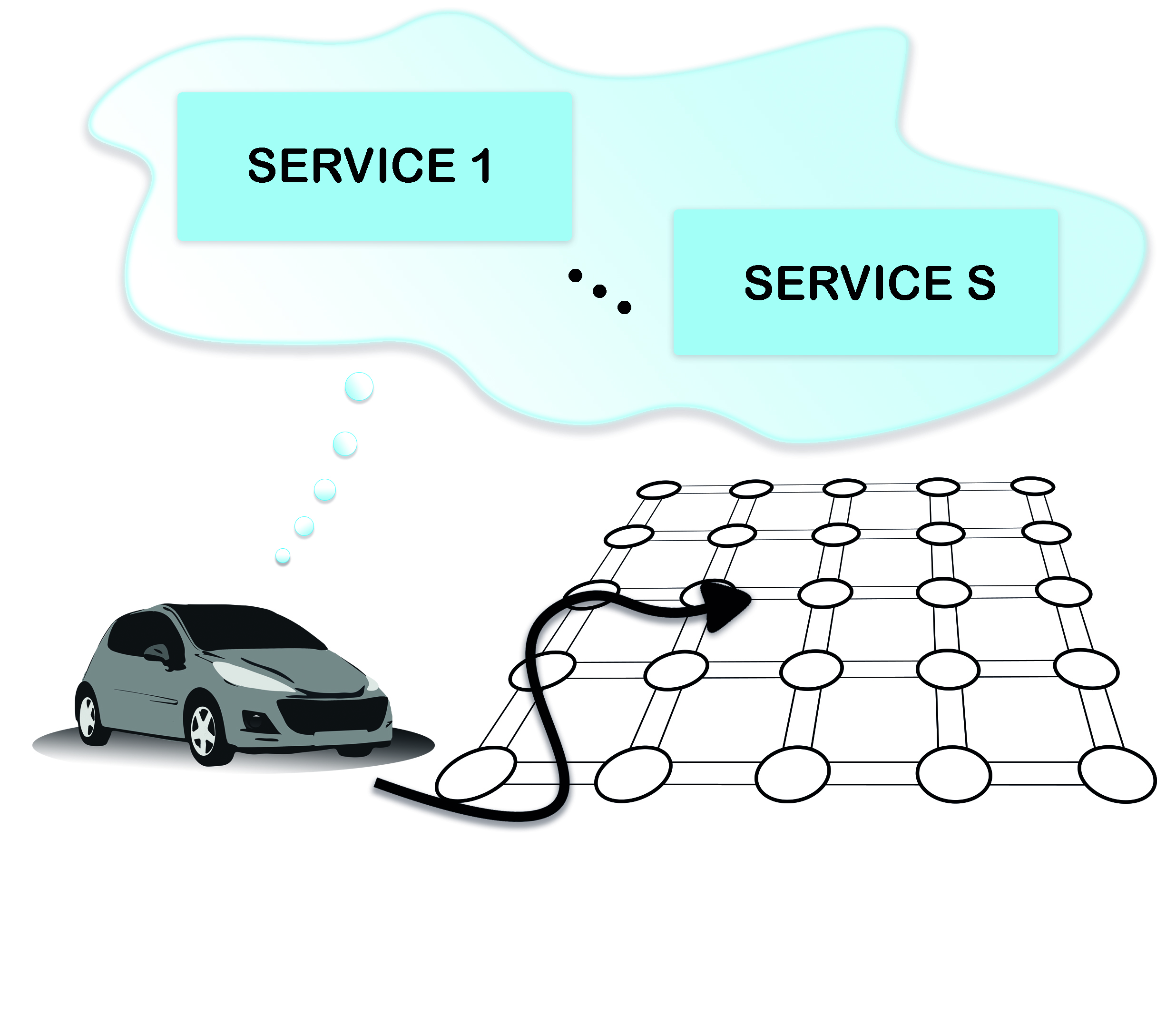}
    \caption{a connected car has access to multiple navigation services/sources to navigate within a grid-like area.}
    \label{fig:scenario}
\end{figure}
\section{Numerical Example}\label{sec:numerical_example}
We  consider the scenario of Fig. \ref{fig:scenario}, where a connected car (i.e. the agent) travels within a geographic area and has access to multiple navigation services. The services come from different providers and each of them returns a different navigation strategy. \textcolor{black}{We now make use of our results to design a mechanism enabling the agent to dynamically choose which service to follow in order to fulfill a  navigation task.} The road network we consider is the one of Fig. \ref{fig:scenario}. We set $N=8$ and the agent wishes to reach node $24$ from node $0$. We let $x_k\in\mathcal{X}:=0:24$ be the position of the agent on the graph at time step $k$, while the target behavior is given by the pmfs $p_{k|k-1}=p_k(x_k|x_{k-1})$, which might e.g. describe the preferred route of the passengers (see Fig. \ref{fig:set-up}). These pmfs can be obtained from past trips via e.g. the algorithm of \cite{8317888,8357977}. The agent has access to $3$ navigation services/sources and each provides a navigation strategy through the pmfs $\pi^{(i)}_{k|k-1}$'s, $i=1:3$ (see Fig. \ref{fig:set-up}). These pmfs are the optimal solution of a problem of the form of Problem \ref{prob:problem_1} having the reward set to $0$. Hence, following Lemma \ref{thm:probl_1_sol}, we have for each $k$ that $\pi^{(i)}_{k|k-1}=p^{(i)}_{k|k-1}$ (the pmfs \textcolor{black}{$p^{(i)}_{k|k-1}$'s} are the target behavior/route for the $i$-th service)\footnote{All pmfs, together with full size versions of the figures are given at \url{https://tinyurl.com/24ns3joy}. Code available upon request.}. 
\begin{figure}[thbp]
\centering
    \includegraphics[width=0.7\columnwidth]{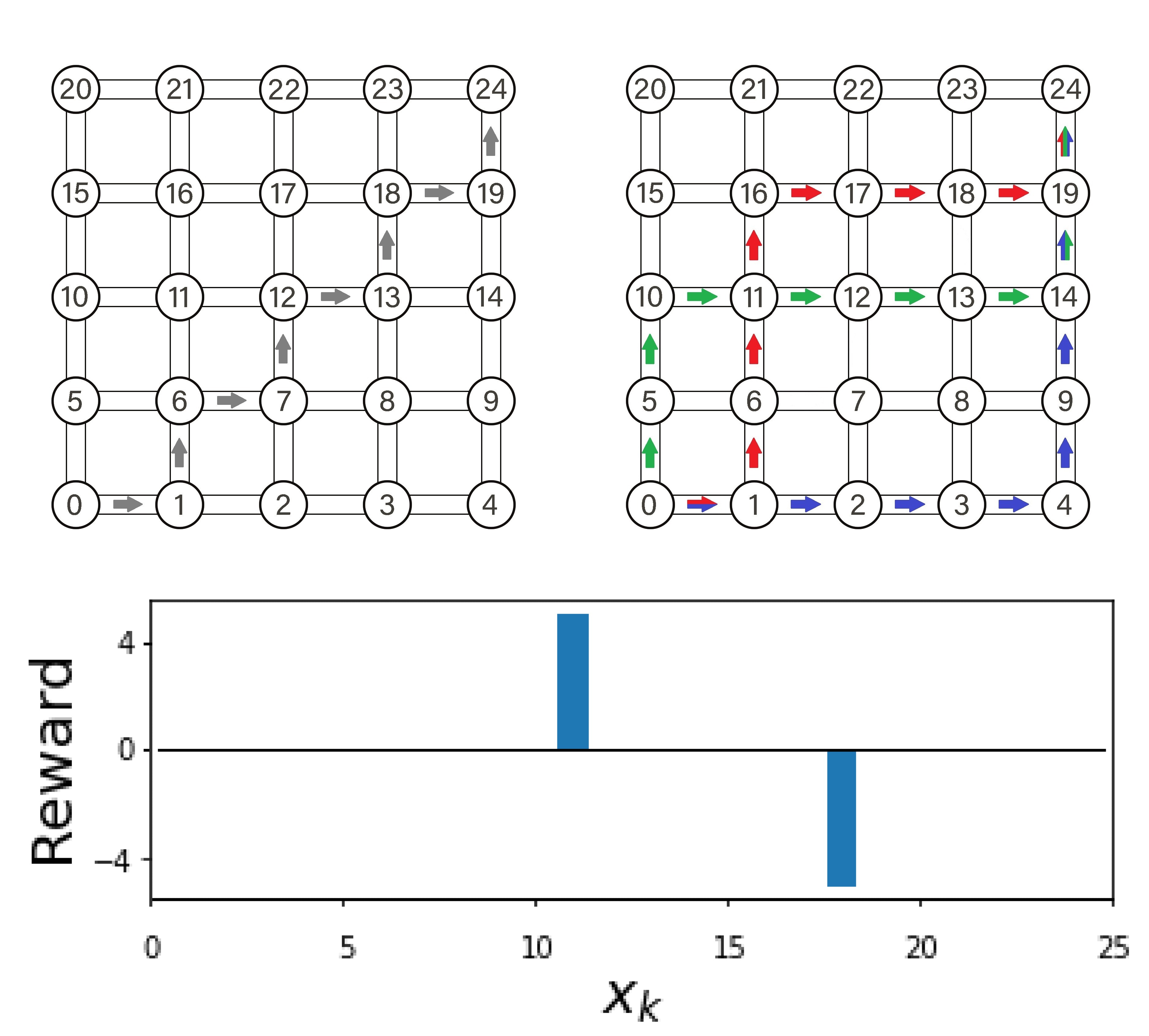}
    \caption{routes sampled from the $p_{k|k-1}$'s (top-left) and  $\pi^{(i)}_{k|k-1}$'s (top-right). Routes from the sources are highlighted with different colors (online). Bottom panel: reward used in our second set of numerical experiments - node $18$ is penalized, while $11$ is favored.}
    \label{fig:set-up}
\end{figure}

\noindent We implemented a Python script \textcolor{black}{which, given the   destination of the agent, returns: (i) the solution to Problem \ref{prob:problem_3}, i.e. $\left\{\pi_{k|k-1}^\ast\right\}_{1:N}$, by implementing the backward recursion of Lemma \ref{thm:probl_2_sol}; (ii) the route obtained by sampling, at each $k$, from $\pi_{k|k-1}^\ast$}; (iii) the cost, $\mathcal{C}\left\{\pi_{1:N|0}^\ast\right\}$ (the expression for $\mathcal{C}\left\{\cdot\right\}$ is in Def. \ref{def:regret}). We now consider two cases. First, the agent reward is  set to $0$. This reward captures situations where the agent only wishes to track the target behavior (e.g. the preferred route). This yields the pmfs (and hence the routes)  of Fig. \ref{fig:case_1} - top panels. The corresponding cost, i.e. $\mathcal{C}\left\{\pi_{1:N|0}^\ast\right\}$, is equal to $0$ and, from Prop. \ref{prop:regret_upper}, we get $\bar{R}_{1:N|0}=0$. In our second set of simulations, we  use the reward of Fig. \ref{fig:set-up}. As shown in the bottom panels of Fig. \ref{fig:case_1}, since node $11$ is now favored over node $18$ (due to e.g. risks/traffic associated to the latter node), the agent deviates from its target and the resulting route avoids node $18$. The corresponding cost is $\mathcal{C}\left\{\pi^\ast_{1:N|0}\right\} = {1.57}$ while, by means of Prop. \ref{prop:regret_upper}, we have that $\bar{R}_{1:N|0} \le {27.32}$, with this non-zero bound, due to the discrepancy between the target/rewards of the agent and these of the sources.
\begin{figure}[thbp]
\centering
    \includegraphics[width=0.85\columnwidth]{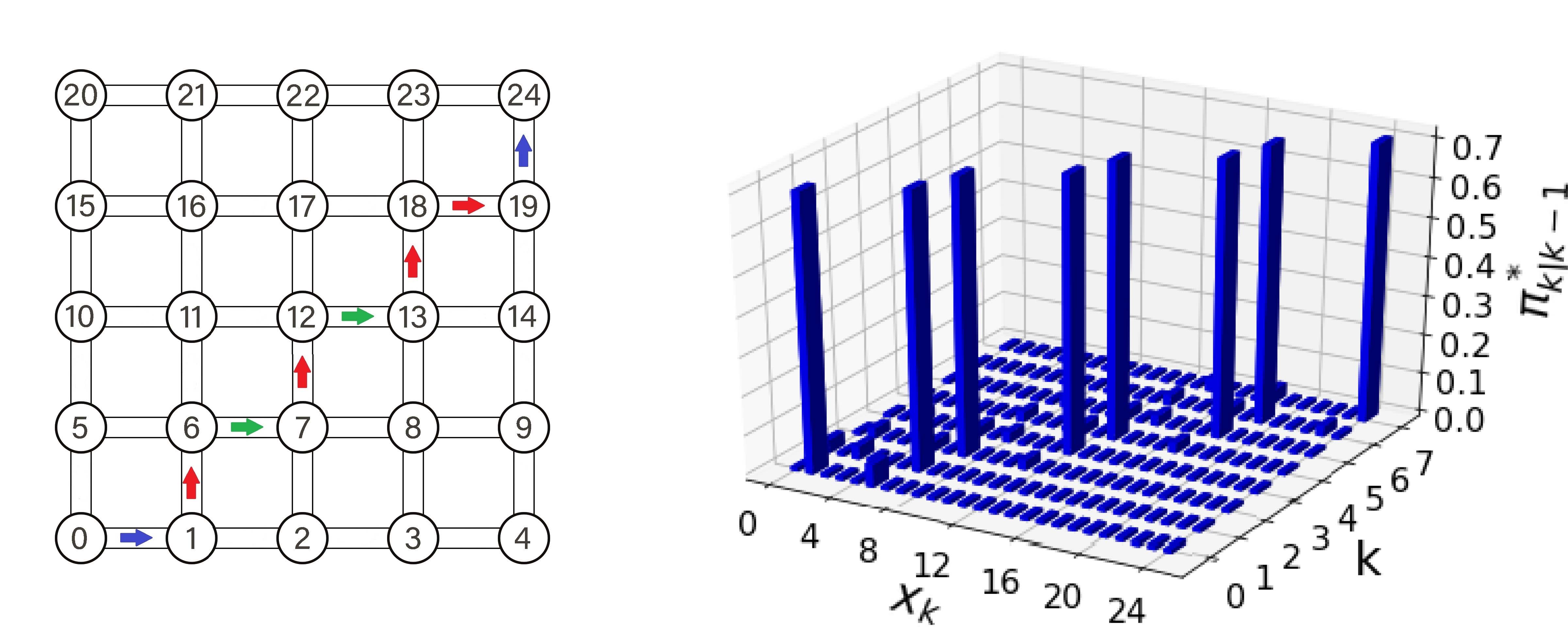}\\
    \includegraphics[width=0.85\columnwidth]{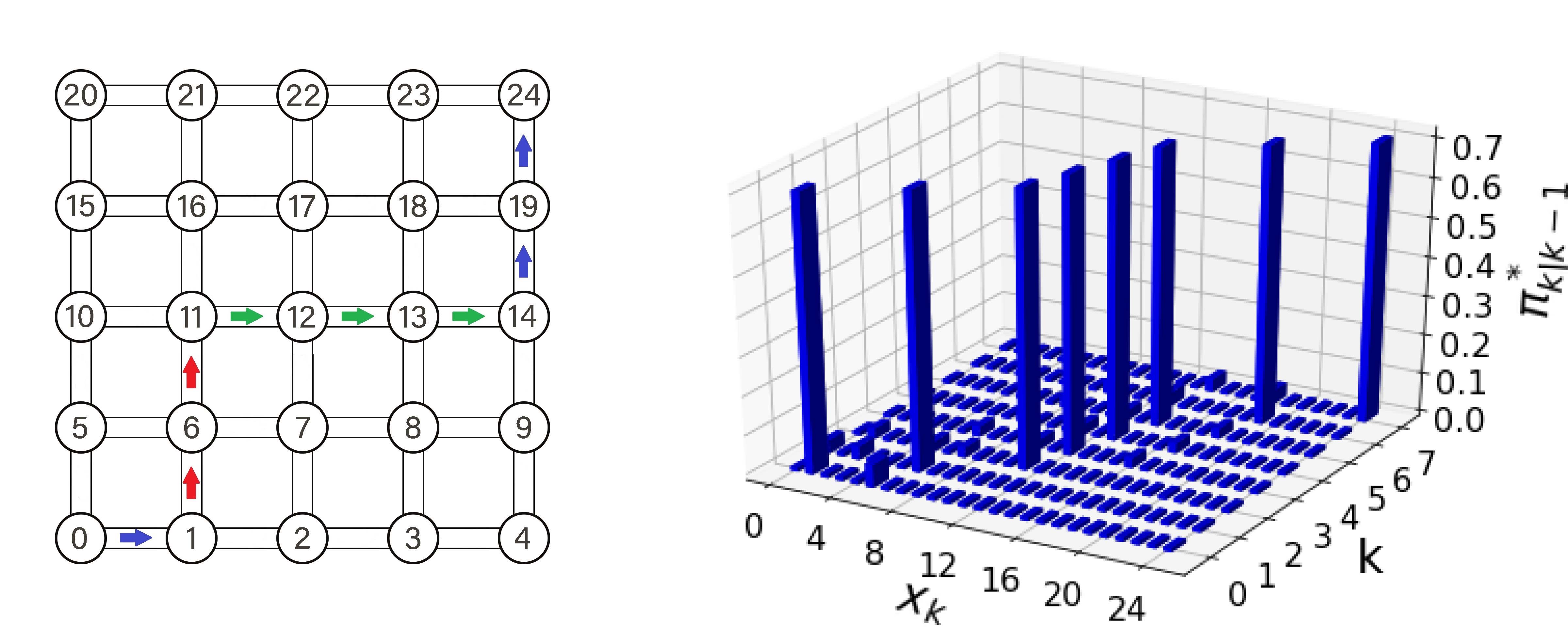}
    \caption{the agent route (left panels) and the pmfs from which it is sampled (right panels). In the top panels the reward is set to $0$, while in the bottom panels the reward is the one of Fig. \ref{fig:set-up}. The colors (online) for the routes highlight which source is selected at each $k$ by the agent (the same color code as Fig. \ref{fig:set-up} is used).}
    \label{fig:case_1}
\end{figure}
\section{Conclusions}
We considered the problem of designing agents that are able to dynamically choose between different data sources in order to tackle tracking tasks. We formulated the problem as a data-driven control problem and this led to study an integer optimal control problem. After finding the optimal solution, which determines how the use of the different sources should be orchestrated by the agent, we formalized a notion of regret. Then, by solving a relaxation of the  control problem, we gave a regret upper bound. The results were complemented via simulations on a connected cars scenario.

\appendix
\subsubsection*{Proof of Lemma \ref{thm:probl_1_sol}} we first show that Problem \ref{prob:problem_1} can be decomposed into a sequence of sub-problems, which can be solved via backward recursion. Then, the expression for the optimal solution is obtained by explicitly solving the sub-problems. We use  $\mathcal{J}_n\left\{\pi_{1:n-1|0},p_{1:n-1|0}\right\}$ to denote $\DKL\left(\pi_{1:n-1|0}||p_{1:n-1|0}\right)- \sum_{k=1}^{n-1}\E_{\pi_{k-1:k-1}}\left[\tilde{r}_k(\bv{X}_{k-1})\right]$. Following the same reasoning used to prove Theorem \ref{thm:problem_3}, we find that (\ref{eqn:problem_1}) can be  formulated as the sum of:
\begin{equation}\label{eqn:split_1_problem_1}
    \begin{aligned}
    \underset{\left\{ \pi_{k|k-1}\right\}_{1:N-1}}{\text{min}}
    & \mathcal{J}_N\left\{\pi_{1:N-1|0},p_{1:N-1|0}\right\} \\
    s.t. & \ \pi_{k|k-1}\in\sD, \  \ k \in 1 : N-1,
    \end{aligned} 
\end{equation}
and 
\begin{equation}\label{eqn:split_2_problem_1}
    \begin{aligned}
    \underset{\pi_{N|N-1}}{\text{min}}
    &\E_{\pi_{N-1:N-1}}\left[\DKL\left(\pi_{N|N-1}||p_{N|N-1}\right) - \tilde{r}_N(\bv{X}_{N-1}) \right]\\
    s.t. & \ \pi_{N|N-1}\in\sD.
    \end{aligned} 
\end{equation}
We can then approach Problem \ref{prob:problem_1} by solving (\ref{eqn:split_2_problem_1}) and then by  taking into account its solution to solve (\ref{eqn:split_1_problem_1}).  Moreover, the minimum of the above problem is $\E_{\pi_{N-1:N-1}}\left[\bar c_N(\bf{X}_{N-1}))\right]$, 
where $\bar c_N(\bf{x}_{N-1})$ is the optimal cost of:
\begin{equation}\label{eqn:step_2_problem_1}
    \begin{aligned}
    \underset{\pi_{N|N-1}}{\text{min}}
    &\mathcal{C}_N\left\{\pi_{N|N-1}\right\}\\
    s.t. & \ \pi_{N|N-1}\in\sD,
    \end{aligned} 
\end{equation}
$\mathcal{C}_N\left\{\pi_{N|N-1}\right\}:=\DKL\left(\pi_{N|N-1}||p_{N|N-1}\right) - \tilde{r}_N(\bv{x}_{N-1})$. We also note that  (\ref{eqn:step_2_problem_1}) is convex with a strictly convex twice differentiable (w.r.t. the decision variable) cost functional and we now find its minimizer by imposing the first order stationarity conditions on its Lagrangian
\begin{equation*}
\sL\left(\pi_{N|N-1},\lambda_N\right):= \int \pi_{N|N-1}\left(\ln\left(\frac{\pi_{N|N-1}}{p_{N|N-1}}-\bar{\rho}_N(\bv{x}_N)\right)\right)d\bv{x}_N + \lambda_N\left(\int \pi_{N|N-1}d\bv{x}_N -1\right),
\end{equation*}
where $\bar{\rho}_N(\bv{x}_N):= {r}_N(\bv{x}_N) + \hat{\rho}_N(\bv{x}_N)$, with $\hat{\rho}_N(\bv{x}_N) = 0$, and $\lambda_N$ is the Lagrange multiplier corresponding to the only constraint of the problem in (\ref{eqn:step_2_problem_1}). By imposing the stationarity condition with respect to $\pi_{N|N-1}$ we get that the optimal solution $\bar \pi_{N|N-1}$ must satisfy
\begin{equation*}
\ln\frac{\bar\pi_{N|N-1}}{p_{N|N-1}} - \bar{\rho}_N(\bv{x}_N) + 1 + \lambda_N = 0,
\end{equation*}
which then yields
\begin{equation*}
\bar\pi_{N|N-1} = p_{N|N-1}\frac{\exp(\bar{\rho}_N(\bv{x}_N))}{\exp(1+\lambda_N)}.
\end{equation*}
The Lagrange multiplier $\lambda_N$ can be found by imposing the stationarity condition of the Lagrangian with respect to $\lambda_N$. By imposing such a condition we get
\begin{equation*}
\exp(1+\lambda_N) = \int p_{N|N-1}{\exp(\bar{\rho}_N(\bv{x}_N))}d\bv{x}_N.
\end{equation*}
Hence: 
\begin{equation*}
\bar\pi_{N|N-1} = \frac{p_{N|N-1}\exp\left(\bar{\rho}_N(\bv{x}_N)\right)}{\int p_{N|N-1}\exp\left(\bar{\rho}_N(\bv{x}_N)\right)d\bv{x}_N}.
\end{equation*}
This yields (\ref{eqn:policy_probl_1}) - (\ref{eqn:policy_backward_probl_1}) at $k=N$. Moreover, the minimum of  (\ref{eqn:step_2_problem_1}) is given by $-\ln\E_{p_{N|N-1}}\left[\exp(\bar\rho_N(\bv{X}_N))\right]$. From this, the minimum for (\ref{eqn:split_2_problem_1}) is $-\E_{\pi_{N-1:N-1}}\left[\ln\E_{p_{N|N-1}}\left[\exp(\bar{\rho}_N(\bv{X}_N))\right]\right]$.
Next, we solve Problem \ref{prob:problem_1} for the remaining time instants. Having split Problem \ref{prob:problem_1} as the sum of the sub-problems (\ref{eqn:split_1_problem_1}) - (\ref{eqn:split_2_problem_1}) implies that its cost functional can be written as 
\begin{equation*}
\mathcal{J}_N\left\{\pi_{1:N-1|0},p_{1:N-1|0}\right\}-\E_{\pi_{N-1:N-1}}\left[\ln\E_{p_{N|N-1}}\left[\exp(\bar{\rho}_N(\bv{X}_N))\right]\right],
\end{equation*}
which, (see Property 1 of \cite{gagliardi2020probabilistic}), is equal to:
\begin{align*}
&\mathcal{J}_{N-1}\left\{\pi_{1:N-2|0},p_{1:N-2|0}\right\} - \E_{\pi_{N-2:N-2}}\left[\tilde{r}_{N-1}(\bv{X}_{N-2})\right] + \E_{\pi_{1:N-2|0}}\left[\DKL\left(\pi_{N-1|N-2}||p_{N-1|N-2}\right)\right]  \\
&-\E_{\pi_{N-1:N-1}}\left[\ln\E_{p_{N|N-1}}\left[\exp(\bar{\rho}_N(\bv{X}_N))\right]\right].
\end{align*}
Moreover: 
\begin{align*}
-\E_{\pi_{N-1:N-1}}\left[\ln\E_{p_{N|N-1}}\left[\exp(\bar{\rho}_N(\bv{X}_N))\right]\right] &= -\E_{\pi_{N-1:N-1}}\left[\hat{\rho}_{N-1}(\bv{X}_{N-1})\right]\\
&= -\E_{\pi_{N-2:N-1}}\left[\hat{\rho}_{N-1}(\bv{X}_{N-1})\right]\\
&=\int\int\pi_{N-1|N-2}\left(\bv{x}_{N-1}|\bv{x}_{N-2}\right)\pi_{N-2:N-2}(\bv{x}_{N-2})\hat{\rho}_{N-1}(\bv{x}_{N-1})d\bv{x}_{N-1}d\bv{x}_{N-2}\\
&=-\E_{\pi_{N-2:N-2}}\left[\E_{\pi_{N-1|N-2}}\left[\hat{\rho}_{N-1}(\bv{X}_{N-1})\right]\right],
\end{align*}
and
\begin{align*}
 \E_{\pi_{1:N-2|0}}\left[\DKL\left(\pi_{N-1|N-2}||p_{N-1|N-2}\right)\right]  &=\E_{\pi_{N-2:N-2}}\left[\DKL\left(\pi_{N-1|N-2}||p_{N-1|N-2}\right)\right].
\end{align*}

Hence:
\begin{align*}
&- \E_{\pi_{N-2:N-2}}\left[\tilde{r}_{N-1}(\bv{X}_{N-2})\right] + \E_{\pi_{1:N-2|0}}\left[\DKL\left(\pi_{N-1|N-2}||p_{N-1|N-2}\right)\right]  -\E_{\pi_{N-1:N-1}}\left[\ln\E_{p_{N|N-1}}\left[\exp(\bar{\rho}_N(\bv{X}_N))\right]\right]\\
&= \E_{\pi_{N-2:N-2}}\left[\DKL\left(\pi_{N-1|N-2}||p_{N-1|N-2}\right)- \E_{\pi_{N-1|N-2}}\left[{r}_{N-1}(\bv{X}_{N-1})+\hat{\rho}_{N-1}(\bv{X}_{N-1})\right]\right]\\
\end{align*}

Again, this means that the problem can be split as the sum of:
\begin{equation*}
    \begin{aligned}
    \underset{\left\{ \pi_{k|k-1}\right\}_{1:N-2}}{\text{min}}
    &\mathcal{J}_{N-1}\left\{\pi_{1:N-2|0},p_{1:N-2|0}\right\}\\
    s.t. & \ \pi_{k|k-1}\in\sD, \  \ k \in 1 :N-2,
    \end{aligned} 
\end{equation*}
and 
\begin{equation}\label{eqn:split_2_problem_4}
    \begin{aligned}
    \underset{\pi_{N-1|N-2}}{\text{min}}
    &\E_{\pi_{N-2:N-2}}\left[\mathcal{C}_{N-1}\left\{\pi_{N-1|N-2}\right\} \right]\\
    s.t. & \ \pi_{N-1|N-2}\in\sD,
    \end{aligned} 
\end{equation}
with 
\begin{align*}
&\mathcal{C}_{N-1}\left\{\pi_{N-1|N-2}\right\}: = \int \pi_{N-1|N-2}(\ln\frac{\pi_{N-1|N-2}}{p_{N-1|N-2}}-\bar{\rho}_{N-1}(\bv{x}_{N-1}))d\bv{x}_{N-1}\\
\text{and } &\bar{\rho}_{N-1}(\bv{x}_{N-1}) = {r}_{N-1}(\bv{x}_{N-1}) + \hat{\rho}_{N-1}(\bv{x}_{N-1}).
\end{align*}
The minimum for  (\ref{eqn:split_2_problem_4}) is $\E_{\pi_{N-2:N-2}}\left[\bar c_{N-1}(\bv{X}_{N-2}))\right]$, with $\bar c_{N-1}(\bv{x}_{N-2})$ being the cost obtained by solving:
\begin{equation*}
    \begin{aligned}
    \underset{\pi_{N-1|N-2}}{\text{min}}
    &\mathcal{C}_{N-1}\left\{\pi_{N-1|N-2}\right\}\\
    s.t. & \pi_{N|N-2}\in\sD.
    \end{aligned} 
\end{equation*}
That is, once the problem at time $N$ is solved, the optimization problem up to $N-1$ can be still  split as the sum of two sub-problems with the one at $k=N-1$ being independent on the sub-problem up to $k=N-2$. The sub-problem at $k=N-1$ has the same structure as (\ref{eqn:step_2_problem_1}).  Hence, by defining $\bar{\rho}_{N-1}(\bv{x}_{N-1}) = {r}_{N-1}(\bv{x}_{N-1}) + \hat{\rho}_{N-1}(\bv{x}_{N-1})$, we have that its solution is  
\begin{equation*}
\bar \pi_{N-1|N-2} = \frac{p_{N-1|N-2}\exp\left(\bar{\rho}_{N-1}(\bv{x}_{N-1})\right)}{\int p_{N-1|N-2}\exp\left(\bar{\rho}_{N-1}(\bv{x}_{N-1})\right)d\bv{x}_{N-1}}.
\end{equation*}
This gives (\ref{eqn:policy_probl_1}) - (\ref{eqn:policy_backward_probl_1}) at $k=N-1$. The optimal cost for the sub-problem at $k=N-1$ is
$$
-\E_{\pi_{N-2:N-2}}\left[\ln\E_{p_{N-1|N-2}}\left[\exp(\bar{\rho}_{N-1}(\bv{X}_{N-1}))\right]\right].
$$
We can then conclude the proof by noticing the following. By iterating the above arguments we find that, at each of the remaining time steps in $1: N-2$, Problem \ref{prob:problem_1} can always be split as the sum of two sub-problems, where the last one can be solved by finding the minimum of:
 \begin{equation}\label{eqn:split_last_problem_1}
    \begin{aligned}
    \underset{\pi_{k|k-1}}{\text{min}}
    & \DKL\left(\pi_{k|k-1}||p_{k|k-1}\right) - \E_{\pi_{k|k-1}}\left[ \bar{\rho}_k(\bv{X}_{k-1})\right]\\
    s.t. & \ \pi_{k|k-1}\in\sD,
    \end{aligned} 
\end{equation}
and $\bar{\rho}_{k}(\bv{x}_{k}) = {r}_k(\bv{x}_{k}) + \hat{\rho}_k(\bv{x}_{k})$ with $\hat{\rho}_k(\bv{x}_k) := \ln\E_{p_{k+1|k}}\left[\exp(\bar{\rho}_{k+1}(\bv{X}_{k+1}))\right]$. Hence, the solution of  (\ref{eqn:split_last_problem_1}) is 

$$
 \bar \pi_{k|k-1} = \frac{p_{k|k-1}\exp\left(\bar{\rho}_k(\bv{x}_k)\right)}{\int p_{k|k-1}\exp\left(\bar{\rho}_k(\bv{x}_k)\right)d\bv{x}_k},
$$ 
and from this we can draw the desired conclusion. \QED

\bibliographystyle{IEEEtran} 
\bibliography{SC_russobib}  


\end{document}